\newcolumntype{L}{>{\displaystyle}l}
\newcolumntype{C}{>{\displaystyle}c}
\newcolumntype{R}{>{\displaystyle}r}
 \newtheorem{theorem}{Theorem}[section]
 \newtheorem*{theorem*}{Theorem}
 \newtheorem*{lemma*}{Lemma}
 \newtheorem{proposition}[theorem]{Proposition}
 \newtheorem{fact*}{Fact}
 \newtheorem{lemma}[theorem]{Lemma}
 \newtheorem{corollary}[theorem]{Corollary}
\theoremstyle{definition}
 \newtheorem{definition}[theorem]{Definition}
 \newtheorem*{remark*}{Remark}
\numberwithin{equation}{section}
\newcommand{\R}{\boldsymbol{R}}
\newcommand{\rank}{\operatorname{rank}}
\renewcommand{\phi}{\varphi}
\newcommand{\sign}{\operatorname{sgn}}
\newcommand{\Hom}{\operatorname{Hom}}
\newcommand{\coker}{\operatorname{coker}}
\newcommand{\A}{\mathcal{A}}
\newcommand{\ep}{\varepsilon}
\newcommand{\pmt}[1]{{\begin{pmatrix} #1  \end{pmatrix}}}
\newcommand{\trans}[1]{\vphantom{#1}^t\!#1}
\newcommand{\diff}[2]{\operatorname{Diff}^{#2}(#1)}
\newcommand{\mycomment}[1]{}
\newcommand{\red}[1]{\textcolor{Black}{#1}}
\begin{document}
\begin{center}
{\large {\bf Criteria for Morin singularities into higher dimensions}}
\\[2mm]
{\today}
\\[2mm]
Kentaro Saji\\
\end{center}
\renewcommand{\thefootnote}{\fnsymbol{footnote}}
\footnote[0]{
Dedicated to Professor Yasutaka Nakanishi
on the occasion of his\/ $60$th birthday}
\footnote[0]{
2010 Mathematics Subject classification.
Primary 57R45; Secondary 58K60, 58K65.}
\footnote[0]{
Partly supported by 
Japan Society for the Promotion of Science (JSPS)
and 
Coordenadoria de Aperfei\c{c}oamento de Pessoal de N\'ivel Superior
under the Japan-Brazil research cooperative
program and
Grant-in-Aid for Scientific Research (C) No. 26400087,
from JSPS.}
\footnote[0]{Keywords and Phrases. Morin singularities, criteria}

\begin{abstract}      
We give criteria for Morin singularities
into higher dimensions.
As an application, we study the number of
$\A$-isotopy classes of Morin singularities.
\end{abstract}

\section{Introduction}
A map-germ
$f:(\R^m,0)\to(\R^n,0)$ $(m<n)$ is called an
{\em $r$-Morin singularity\/}
($m\geq r(m-n+1)$)
if it is $\A$-equivalent to
the following map-germ at the origin:
\begin{equation}\label{eq:morinnor1}
h_{0,r}:x
\mapsto
\big(x_1,\ldots,x_{m-1},h_1(x),\ldots,
h_{n-m+1}(x)\big),
\end{equation}
where $x=(x_1,\ldots,x_m)$, and
\begin{equation}\label{eq:morinnor2}
\begin{array}{rcL}
h_i(x)&=&
\sum_{j=1}^rx_{(i-1)r+j}x_m^j\quad(i=1,\ldots,n-m),\\
h_{n-m+1}(x)&=&
\sum_{j=1}^{r-1}x_{(n-m)r+j}x_m^j+x_m^{r+1}.
\end{array}
\end{equation}
We say that two map-germs 
$f,g:(\R^m,0)\to(\R^n,0)$ are {\em $\A$-equivalent\/}
if there exist 
diffeomorphism-germs\hfill 
$\phi:(\R^m,0)\to(\R^m,0)$ \hfill and \hfill 
$\Phi:(\R^n,0)\to(\R^n,0)$ \hfill  such \hfill that\\
$\Phi\circ f\circ \phi=g$ holds.
Morin singularities are stable, and conversely, corank one and stable
germs are Morin singularities. 
This means that Morin singularities are fundamental and
frequently appear as singularities of
maps from one manifold to another.
Morin gave a characterization of them by a
transversality of the Thom-Boardman singularity set
and also gave criteria for germs of a
normalized 
form
$\big(x_1,\ldots,x_{m-1},g_1(x),\ldots,
g_{n-m+1}(x)\big)$.
Morin singularities are also characterized using
the intrinsic derivative due to 
Porteous (\cite{porteous} see also \cite{ando,GG}).
Criteria for singularities without using
normalization are not only more convenient 
but also indispensable in some cases.
We call
criteria without normalizing {\em general criteria}\/.
In fact, in the case of wave front
surfaces in 3-space, general criteria for 
cuspidal edges and swallowtails were given
in \cite{krsuy},
where we studied the local and global behavior of flat fronts 
in hyperbolic 3-space using them.
Recently general criteria for other singularities and 
several applications of them have been given
\red{(see \cite{ishimachi,ist,kruy,nishi,jgea,suyak,syy})}.
In this paper, we give general criteria for Morin singularities.
Using them, we give applications to singularities of
ruling maps
and $\A$-isotopy of Morin singularities.
\red{See \cite{df,fukuda,saeki,saekisaku,sst,szu} for other 
investigations of Morin singularities.}

\section{Singular set and restriction of a map to the singular set}
Let 
$f:(\R^m,0)\to(\R^n,0)$ $(m<n)$ be a map-germ.
We assume $\rank df_0=m-1$.
Then one can take a coordinate system
satisfying
\begin{equation}\label{eq:coord}
\rank d(f_1,\ldots,f_{m-1})=m-1,\quad
\text{and}\quad
(df_{m-1+i})_0=0\quad(i=1,\ldots,n-m+1),
\end{equation}
where $f=(f_1,\ldots,f_n)$.
We set
\begin{equation}\label{eq:lambda}
\begin{array}{rcl}
\lambda_i
&=&
\det(df_1,\ldots,df_{m-1},df_{m-1+i})\quad
(i=1,\ldots,n-m+1),\quad\text{and}\\
\Lambda&=&(\lambda_1,\ldots,\lambda_{n-m+1})
:(\R^m,0)\to(\R^{n-m+1},0).
\end{array}
\end{equation}
Let $0$ be a singular point of $f$.
\red{The} singular point $0$ of $f$ is said to be {\em non-degenerate\/}
if $\rank d\Lambda_0=n-m+1$ holds.
This definition does not depend on the choice of
coordinate system on the source, nor on the target:
\begin{lemma}\label{lem:nondeg}
Non-degeneracy does not depend on the choice of
coordinate system on the source, nor on the target
satisfying\/ \eqref{eq:coord}.
Furthermore, if\/ $0$ is a non-degenerate singular point of\/ $f$,
then the set of singular points\/ $S(f)$ is a manifold.
\end{lemma}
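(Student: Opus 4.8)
The plan is to show first that, on a neighbourhood of the origin, the singular set coincides with $\Lambda^{-1}(0)$, and then to read off both assertions from the behaviour of $\Lambda$. Since $\rank d(f_1,\ldots,f_{m-1})=m-1$ persists near $0$ by continuity, the rows $df_1,\ldots,df_{m-1}$ of the Jacobian stay linearly independent there; hence $\rank df_p=m$ exactly when some $df_{m-1+i}$ escapes $\spann{df_1,\ldots,df_{m-1}}$, that is, when $\lambda_i(p)\neq0$ for some $i$. Thus $S(f)=\Lambda^{-1}(0)$ near $0$. For the \emph{furthermore} part, non-degeneracy means $d\Lambda_0$ has rank $n-m+1=\dim(\R^{n-m+1})$, so $\Lambda$ is a submersion at $0$; by the regular value theorem $S(f)=\Lambda^{-1}(0)$ is a manifold (of dimension $2m-n-1$) near $0$. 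It therefore remains to prove that $\rank d\Lambda_0$ is unchanged under the admissible coordinate changes, which I treat as a source change followed by a target change.

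For a source change $\tilde f=f\circ\phi$ the chain rule gives $J\tilde f=(Jf\circ\phi)\,d\phi$, and since the matrix defining $\tilde\lambda_i$ is obtained by the same row selection, $\tilde\lambda_i=(\lambda_i\circ\phi)\det(d\phi)$; one checks directly that $\tilde f$ again satisfies \eqref{eq:coord}. Writing $J=\det(d\phi)$, which is nowhere zero, and using $\Lambda(0)=0$, the product rule yields $d\tilde\Lambda_0=J(0)\,d\Lambda_0\,d\phi_0$. As $J(0)\neq0$ and $d\phi_0$ is invertible, the rank is preserved.

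The main work is the target change $\tilde f=\Phi\circ f$, and I first record what \eqref{eq:coord} demands of $\Phi$. From \eqref{eq:coord} for both $f$ and $\tilde f$ one gets $\image(df_0)=\image(d\tilde f_0)=W$, where $W:=\R^{m-1}\times\{0\}\subset\R^n$, whence $d\Phi_0(W)=W$. Writing $d\Phi_0$ in block form with diagonal blocks $D_{11},D_{22}$ of sizes $m-1$ and $n-m+1$, this forces the lower-left block $D_{21}$ to vanish, so $d\Phi_0=\pmt{D_{11}&D_{12}\\ 0&D_{22}}$ with $D_{11},D_{22}$ invertible. Now $J\tilde f=B\,Jf$ with $B=(d\Phi)\circ f$, and selecting the rows $1,\ldots,m-1,m-1+i$ expresses $\tilde\lambda_i$ as the determinant of an $m\times m$ product; the Cauchy--Binet formula writes this as $\sum_S c^{(i)}_S\,\mu_S$, where $\mu_S$ runs over the maximal minors of $Jf$ and $c^{(i)}_S$ are the corresponding minors of the selected rows of $B$.

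The key point, which I expect to be the real obstacle, is to control how this large sum of spurious minors behaves under differentiation at the singular point. At $0$ every maximal minor $\mu_S$ of $Jf$ vanishes, and because the last $n-m+1$ rows of $(Jf)_0$ are zero, a term-by-term expansion shows $d\mu_S|_0=0$ unless $S=S_j:=\{1,\ldots,m-1,m-1+j\}$, in which case $d\mu_{S_j}|_0=d\lambda_j|_0$. Hence the product rule collapses the sum to $d\tilde\Lambda_0=C\,d\Lambda_0$, where $C=\big(c^{(i)}_{S_j}(0)\big)_{ij}$ is the matrix of those $(m\times m)$-minors of $d\Phi_0$ taken on rows $\{1,\ldots,m-1,m-1+i\}$ and columns $S_j$. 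Using $D_{21}=0$, each such minor equals $\det(D_{11})\,(D_{22})_{ij}$, so $C=\det(D_{11})\,D_{22}$ is invertible; therefore $\rank d\tilde\Lambda_0=\rank d\Lambda_0$. Composing a source change with a target change gives the general case, completing the proof of coordinate independence.
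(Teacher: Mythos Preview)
Your proof is correct and follows essentially the same strategy as the paper's: source-independence via $\tilde\lambda_i=(\lambda_i\circ\phi)\det d\phi$, target-independence via the block upper-triangular form of $d\Phi_0$ yielding $d\tilde\Lambda_0=(\det D_{11})\,D_{22}\,d\Lambda_0$ (which is exactly the paper's formula \eqref{eq:lambdatar} with $M_1=D_{11}$, $M_4=D_{22}$), and $S(f)=\Lambda^{-1}(0)$ together with the regular-value theorem for the manifold assertion. The only difference in execution is that the paper first uses source-independence to reduce to the normalized form \eqref{eq:normal} and then computes the target change by hand, whereas you bypass that reduction by applying Cauchy--Binet and observing that every maximal minor $\mu_S$ of $Jf$ other than the $\lambda_j$ has vanishing differential at $0$; this is a slightly cleaner, coordinate-free route to the same identity.
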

\begin{proof}
Let $\phi:(\R^m,0)\to(\R^m,0)$ be
a diffeomorphism-germ.
Then 
$$
\det\big(d(f_1\circ\phi),\ldots,d(f_{m-1}\circ\phi),
d(f_{m-1+i}\circ\phi)\big)
=
(\lambda_i\red{\circ\phi})\,\det d\phi\ 
(i=1,\ldots,n-m+1)
$$
holds. Thus non-degeneracy does not depend on the choice of
coordinate systems on the source satisfying \eqref{eq:coord}.
Next, let us assume that
$\rank d(f_1,\ldots,f_{m-1})=m-1$ and 
$(df_{m-1+i})_0=0$ $(i=1,\ldots,n-m+1)$.
Since non-degeneracy does not depend on the choice of
coordinate systems on the source,
we may assume $f$ is written as
\begin{equation}\label{eq:normal}
f(x)=\big(x_1,\ldots,x_{m-1},f_m(x),\ldots,f_{n}(x)\big),\quad
(df_{m-1+i})_0=0,\quad (i=1,\ldots,n-m+1),
\end{equation}
where $x=(x_1,\ldots,x_m)$.
Let us take a diffeomorphism-germ
$\Phi=(\Phi_1,\ldots,\Phi_n):(\R^n,0)\to(\R^n,0)$.
By assumption, we may assume that 
$$
d\Phi_0
=
\left(
\begin{array}{ccc|ccc}
(\Phi_1)_{X_1}&\cdots&(\Phi_1)_{X_{m-1}}&
(\Phi_1)_{X_m}&\cdots&(\Phi_1)_{X_{n}}\\
\vdots&\vdots&\vdots&\vdots&\vdots&\vdots\\
(\Phi_{m-1})_{X_1}&\cdots&(\Phi_{m-1})_{X_{m-1}}&
(\Phi_{m-1})_{X_m}&\cdots&(\Phi_{m-1})_{X_{n}}\\
\hline
&&&
(\Phi_{m})_{X_m}&\cdots&(\Phi_{m})_{X_{n}}\\
 &O& &\vdots&\vdots&\vdots\\
&&&
(\Phi_{n})_{X_m}&\cdots&(\Phi_{n})_{X_{n}}
\end{array}
\right)(0)
=:
\left(
\begin{array}{c|c}
M_1&M_2\\
\hline
O&M_4
\end{array}
\right).
$$
Let us set
$$
\bar\lambda_i
=
\det\big(\Phi_1(f),\ldots,\Phi_{m-1}(f),
\Phi_{m-1+i}(f)\big)\quad
(i=1,\ldots,n-m+1).
$$
Then by a direct calculation we see
\begin{equation}\label{eq:lambdatar}
\begin{array}{L}
\left(\bar\lambda_i\right)_{x_k}(0)
=
\det M_1\left(
\sum_{j=1}^{n-m+1}(\Phi_{m-1+i})_{X_j}(\lambda_j)_{x_k}\right)(0)\quad
(k=1,\ldots,m),\\[6mm]
\trans{\big((\bar \lambda_1)_{x_k},\ldots,(\bar \lambda_{n-m+1})_{x_k}\big)}
\red{(0)}
=
\Big(\det M_1\ M_4
\trans{\big((\lambda_1)_{x_k},\ldots,(\lambda_{n-m+1})_{x_k}\big)}
\Big)\red{(0)}
\end{array}
\end{equation}
for any $i=1,\ldots,n-m+1$, where
$\trans{(\ )}$ is transposition.
Since $\det M_1\det M_4\ne0$ holds \red{at $0$}, 
this shows that non-degeneracy does not depend
on the choice of coordinate systems satisfying \eqref{eq:normal}.
We now show the second part.
It is easily seen that
$S(f)=\Lambda^{-1}(0)$
and non-degeneracy implies that $0$ is a regular value
of $\Lambda$.
Hence $S(f)$ is a manifold.
\end{proof}

Let 
$f:(\R^m,0)\to(\R^n,0)$ satisfies that $\rank df_0=m-1$.
Then there exists a vector field $\eta$ on $(\R^m,0)$
such that
$
\left\langle\eta_p\right\rangle_{\R}=\ker df_p
$
holds for $p\in S(f)$.
We call $\eta$ the {\em null vector field}.
In fact, since $\rank df_0=m-1$,
we may assume that $f$ is written as
\eqref{eq:normal}.
Since $S(f)=\{(f_m',\ldots,f_n')=0\}$ $({~}'=\partial/\partial x_m)$ holds,
$\partial x_m$ satisfies the condition of the null vector field.

Now we discuss higher order non-degeneracy 
and singularities,
by considering restriction of a map-germ
to its singular set.
The procedure is similar to that of the case
of the equidimensional Morin singularities given in \cite{aksing}.
Let $f:(\R^m,0)\to(\R^n,0)$ be a map-germ
and $0$ a non-degenerate singular point.
Let us assume that
$f=(f_1,\ldots,f_n)$ satisfies \eqref{eq:coord}.
Let $\lambda_i$ and $\Lambda$ be as in \eqref{eq:lambda}.
Since $S(f)$ is a manifold, the condition
$\eta_0\in T_0S(f)$ is well-defined.
A non-degenerate singular point $0$ of 
$f:(\R^m,0)\to(\R^n,0)$
is {\em $2$-singular\/}
if $\eta_0\in T_0S(f)$ holds.
This condition is equivalent to $\eta\Lambda(0)=0$,
where $\eta\Lambda$ stands for the directional derivative.
Set 
$S_2(f)=\{p\in S(f)\,|\,\eta_p\in T_pS(f)\}$.
The direction of $\eta$ is unique on $S(f)$,
the definition of $S_2(f)$ does not 
depend on the choice of $\eta$.
\red{Moreover, we have the following lemma.}
\begin{lemma}
\red{The equality\/ $S_2(f)=S(f|_{S(f)})$ holds.}
\end{lemma}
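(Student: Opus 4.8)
The plan is to unwind both sides of the claimed equality in terms of kernels of differentials and reduce everything to the one-dimensionality of $\ker df_p$ along $S(f)$. First I would fix a point $p\in S(f)$ and recall from the discussion preceding the lemma that $S(f)$ is a manifold and that the null vector field $\eta$ spans $\ker df_p=\spann{\eta_p}$; in the normal form \eqref{eq:normal} one has $\eta=\partial_{x_m}$ and $S(f)=\{f_m'=\cdots=f_n'=0\}$, from which it is immediate that $\ker df_p=\spann{\partial_{x_m}}$ at \emph{every} point $p\in S(f)$, not merely at the base point. In particular $\rank df_p=m-1$, so $\ker df_p$ is exactly one-dimensional throughout $S(f)$.

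Next I would interpret the right-hand side $S(f|_{S(f)})$. Since $\dim S(f)=m-(n-m+1)=2m-n-1$ and the hypothesis $m<n$ forces $\dim S(f)<n$, the restriction $f|_{S(f)}\colon S(f)\to\R^n$ is a map from a lower-dimensional manifold into $\R^n$, so a point $p\in S(f)$ is a singular point of it precisely when it fails to be an immersion, i.e. when the linear map $d(f|_{S(f)})_p=df_p|_{T_pS(f)}$ is not injective. Equivalently, $p\in S(f|_{S(f)})$ if and only if $\ker df_p\cap T_pS(f)\ne\{0\}$.

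The key observation is then purely linear-algebraic: because $\ker df_p=\spann{\eta_p}$ is one-dimensional, the intersection $\ker df_p\cap T_pS(f)$ is nontrivial if and only if the single generator $\eta_p$ already lies in $T_pS(f)$. Combining this with the two reformulations above yields
\[
p\in S(f|_{S(f)})\iff \ker df_p\cap T_pS(f)\ne\{0\}\iff \eta_p\in T_pS(f)\iff p\in S_2(f),
\]
which is exactly the asserted equality $S_2(f)=S(f|_{S(f)})$. I do not expect a genuine obstacle here; the only points requiring a little care are confirming the dimension inequality $\dim S(f)<n$, so that ``singular point of $f|_{S(f)}$'' really means ``point where $df_p|_{T_pS(f)}$ is non-injective,'' and verifying that the identification $\ker df_p=\spann{\eta_p}$ holds at all of $S(f)$ rather than only at $0$ — both of which follow directly from the normal form used above.
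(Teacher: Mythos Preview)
Your argument is correct. The key observation --- that $d(f|_{S(f)})_p = df_p|_{T_pS(f)}$ has nontrivial kernel precisely when the one-dimensional space $\ker df_p=\spann{\eta_p}$ meets $T_pS(f)$ --- is exactly the right linear-algebraic reduction, and your checks that $\ker df_p$ stays one-dimensional along $S(f)$ and that $\dim S(f)\le n$ are sound.

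This is, however, a genuinely different route from the paper's. The paper chooses the normal form \eqref{eq:normal}, invokes the implicit function theorem to parametrize $S(f)$ explicitly by $(x_{n-m+2},\ldots,x_m)$, writes out the Jacobian \eqref{eq:sf2} of $g=f|_{S(f)}$ in these coordinates, and shows it drops rank exactly when $x_1'=\cdots=x_{n-m+1}'=0$; the identity \eqref{eq:sf1} then translates this into the vanishing of $\eta\Lambda$ on $S(f)$. Your approach bypasses the explicit parametrization and matrix computation entirely, replacing them with a one-line kernel-intersection argument. What you gain is brevity and conceptual clarity; what the paper's computation additionally yields are the concrete relations \eqref{eq:sf1} and \eqref{eq:sf2}, which make the link between the derivatives of the implicit parametrization and $\eta\Lambda$ explicit and may help the reader see how the inductive pattern (leading to Lemma~\ref{lem:key}) will unfold in coordinates.
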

\begin{proof}
Since the conclusion does not depend on the choice of
coordinate systems and choice of $\eta$,
we may assume that $f$ is written in the form
\eqref{eq:normal},
and $\eta=\partial x_m$.
Transposition of the matrix representation of $d\Lambda_0$ is
\begin{equation}\label{eq:nondeg1}
\pmt{
(f_m')_{x_1}&\cdots&(f_n')_{x_1}\\
\vdots&\vdots&\vdots\\
(f_m')_{x_{m-1}}&\cdots&(f_n')_{x_{m-1}}\\
f_m''&\cdots&f_n''}(0)
=
\pmt{
(f_m')_{x_1}&\cdots&(f_n')_{x_1}\\
\vdots&\vdots&\vdots\\
(f_m')_{x_{m-1}}&\cdots&(f_n')_{x_{m-1}}\\
0&\cdots&0}(0),
\end{equation}
where $'=\partial/\partial x_m$.
We remark that
the assumption $2$-singular implies
$\eta\Lambda(0)=0$, thus the last row vanishes.
Since 
the rank of the matrix \eqref{eq:nondeg1} is $n-m+1$
by non-degeneracy,
we may assume
$$
\rank\pmt{
(f_m')_{x_1}&\cdots&(f_n')_{x_1}\\
\vdots&\vdots&\vdots\\
(f_m')_{x_{n-m+1}}&\cdots&(f_n')_{x_{n-m+1}}}(0)
=n-m+1
$$
by a numbering change.
By the implicit function theorem,
there exist functions
$$
x_1(x_{n-m+2},\ldots,x_{\red{m}}),\ldots,
x_{n-m+1}(x_{n-m+2},\ldots,x_{\red{m}})
$$
such that
\begin{equation}\label{eq:nondeg2}
\Lambda(x_1(x_{n-m+2},\ldots,x_m),\ldots,x_{n-m+1}(x_{n-m+2},\ldots,x_m),
x_{n-m+2},\ldots,x_m)\equiv0
\end{equation}
holds, where $\equiv$ means that the equality holds identically.
Differentiating \eqref{eq:nondeg2}
by $x_m$, we have
\begin{equation}\label{eq:sf1}
\pmt{
(\lambda_1)_{x_1}&\cdots&(\lambda_1)_{x_{n-m+1}}\\
\vdots&\vdots&\vdots\\
(\lambda_{n-m+1})_{x_1}&\cdots&(\lambda_{n-m+1})_{x_{n-m+1}}
}
\pmt{x_1'\\ \vdots \\ x_{n-m+1}'}
+
\pmt{\lambda_1'\\ \vdots \\ \lambda_{n-m+1}'}
\equiv0.
\end{equation}
On the other hand,
$g=f|_{S(f)}$ is parametrized by
$$
\begin{array}{L}
\Big(
x_1(x_{n-m+2},\ldots,x_{\red{m}}),\ldots,
x_{n-m+1}(x_{n-m+2},\ldots,x_{\red{m}}),
x_{n-m+2},\ldots,x_{m-1},\\
\hspace{8mm}
f_{m}\big(x_1(x_{n-m+2},\ldots,x_{\red{m}}),\ldots,
x_{n-m+1}(x_{n-m+2},\ldots,x_{\red{m}}),
x_{n-m+2},\ldots,x_{m}\big),\ldots,\\
\hspace{16mm}
f_{n}\big(x_1(x_{n-m+2},\ldots,x_{\red{m}}),\ldots,
x_{n-m+1}(x_{n-m+2},\ldots,x_{\red{m}}),
x_{n-m+2},\ldots,x_{m}\big)\Big).
\end{array}
$$
Since $f_m'=\cdots=f_n'=0$ on $S(f)$,
the transposition of the matrix representation of $dg$ is
\begin{equation}\label{eq:sf2}
\left.\left(
\begin{array}{ccc|c|CCC}
(x_1)_{x_{n-m+2}}&\cdots&(x_{n-m+1})_{x_{n-m+2}}& &*&\cdots&*\\
\vdots&\vdots&\vdots&E&\vdots&\vdots&\vdots\\
(x_1)_{x_{m-1}}&\cdots&(x_{n-m+1})_{x_{m-1}}& &*&\cdots&*\\
\hline
x_1'&\cdots&x_{n-m+1}'&0&
\sum_{i=1}^{n-m+1}(f_m)_{x_i}x_i'&\cdots&
\sum_{i=1}^{n-m+1}(f_n)_{x_i}x_i'
\end{array}
\right)\right|_{S(f)},
\end{equation}
where $E$ stands for the identity matrix.
Thus the matrix \eqref{eq:sf2} is not full-rank
if and only if $x_1'=\cdots=x_{n-m+1}'=0$.
By \eqref{eq:sf1}, the condition $x_1'=\cdots=x_{n-m+1}'=0$ 
is equivalent to
$(\eta\Lambda)|_{S(f)}=0$.
This implies that $S(g)=S_2(f)$.
\end{proof}

Let $0$ be a $2$-\red{singular} point of $f$.
We say that $0$ is {\em $2$-non-degenerate\/} if
$$d(\eta\Lambda)_0(T_0S(f))=T_0\R^{n-m+1}$$ holds.
This condition does not depend on the choice 
of coordinate systems and the choice of $\eta$.
Moreover, $2$-non-degeneracy implies that
$S_2(f)$ is a manifold.
In fact, it holds that
$S_2(f)=\{p\in S(f)\,|\,\eta_p\in T_pS(f)\}
=\{p\in S(f)\,|\,\eta\Lambda(p)=0\}$,
and that
$d(\eta\Lambda)_0(T_0S(f))=T_0\R^{n-m+1}$ implies
that $0$ is a regular value of 
$\eta\Lambda:S(f)\to\R^{\red{n-m+1}}$.
Since $\dim S(f)=m-(n-m+1)$ holds,
$m-(n-m+1)\geq n-m+1$ is needed for
$2$-non-degeneracy.
\begin{lemma}\label{lem:2nondeg}
Let\/ $f:(\R^m,0)\to(\R^n,0)$ be a map-germ with\/ $\rank df_0=m-1$.
A singular point\/ $0$ is $2$-non-degenerate
if and only if\/
$(\Lambda,\eta\Lambda)=0$, and
$$
\rank d(\Lambda,\eta\Lambda)_0=2(n-m+1).
$$
\end{lemma}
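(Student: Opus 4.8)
The plan is to reduce the statement to an elementary block-matrix fact. Throughout set $k=n-m+1$. Recall from Lemma \ref{lem:nondeg} that non-degeneracy, i.e.\ $\rank d\Lambda_0=k$, makes $S(f)=\Lambda^{-1}(0)$ a manifold near $0$ with $T_0S(f)=\ker d\Lambda_0$; also $\Lambda(0)=0$ automatically (as $0$ is singular) and $\eta\Lambda(0)=d\Lambda_0(\eta_0)$, so the vanishing $(\Lambda,\eta\Lambda)(0)=0$ is nothing but the $2$-singular condition $\eta\Lambda(0)=0$. Writing $A=d\Lambda_0$ and $B=d(\eta\Lambda)_0$ for the two $k\times m$ blocks, the matrix in question is $d(\Lambda,\eta\Lambda)_0=\pmt{A\\ B}$, and the definition of $2$-non-degeneracy reads exactly $B(\ker A)=\R^{k}$ once $T_0S(f)=\ker A$ is known.

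First I would isolate the core fact: for a $k\times m$ matrix $A$ with $\rank A=k$ and an arbitrary $k\times m$ matrix $B$, one has $\rank\pmt{A\\ B}=2k$ if and only if $B|_{\ker A}\colon\ker A\to\R^{k}$ is onto. This I would prove by viewing $D=(A,B)\colon\R^{m}\to\R^{k}\times\R^{k}$: since the first projection of $D$ equals $A$, which is surjective, $\image D=\R^{2k}$ holds precisely when $\image D$ also contains $\{0\}\times\R^{k}$, and $\image D\cap(\{0\}\times\R^{k})=\{0\}\times B(\ker A)$; hence full rank $2k$ is equivalent to $B(\ker A)=\R^{k}$.

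Granting this, the forward implication is immediate. If $0$ is $2$-non-degenerate then it is in particular non-degenerate ($\rank A=k$, $\ker A=T_0S(f)$) and $2$-singular (so $(\Lambda,\eta\Lambda)(0)=0$), and its defining property $d(\eta\Lambda)_0(T_0S(f))=T_0\R^{k}$ is exactly $B(\ker A)=\R^{k}$; the core fact then yields $\rank d(\Lambda,\eta\Lambda)_0=2k$. For the converse I would first recover non-degeneracy at no cost: subadditivity of rank gives $2k=\rank\pmt{A\\ B}\le\rank A+\rank B\le k+k$, which forces $\rank A=k$. Thus $0$ is non-degenerate, $T_0S(f)=\ker A$, and $(\Lambda,\eta\Lambda)(0)=0$ makes $0$ both singular and $2$-singular; reading the core fact in the reverse direction converts $\rank d(\Lambda,\eta\Lambda)_0=2k$ into surjectivity of $B|_{\ker A}$, which is $2$-non-degeneracy.

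The step I expect to require the most care is the clean identification $T_0S(f)=\ker A$ together with the fact that the intrinsic derivative $d(\eta\Lambda)_0|_{T_0S(f)}$ appearing in the definition of $2$-non-degeneracy agrees with the ambient block $B$ restricted to $\ker A$, independently of the extension of $\eta$ off $S(f)$ (two extensions agreeing on $S(f)$ differ by a field vanishing there, so their $\eta\Lambda$ differ by a function vanishing on $S(f)$, whose differential kills $T_0S(f)$). Once this identification is fixed, everything else is the routine block bookkeeping above; alternatively one can sidestep the issue entirely by first normalizing $f$ to the form \eqref{eq:normal} with $\eta=\partial x_m$, exactly as in the proof of the preceding lemma, where $A$ and $B$ are explicit.
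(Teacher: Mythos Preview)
Your proof is correct and follows essentially the same route as the paper: both reduce the equivalence to the block linear-algebra fact that, when $\rank A=k$, the stacked matrix $\pmt{A\\B}$ has rank $2k$ if and only if $B|_{\ker A}$ is onto. The only cosmetic difference is that the paper makes this explicit by choosing a frame $\eta,\xi_2,\dots,\xi_m$ adapted to $T_0S(f)$ and reading off the block-triangular form $\pmt{O&J_2\\J_1&*}$, whereas you prove the same fact in a coordinate-free way via the image description $\image(A,B)\cap(\{0\}\times\R^k)=\{0\}\times B(\ker A)$.
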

\begin{proof}
Let us assume 
$(\Lambda,\eta\Lambda)=0$ and $
\rank d(\Lambda,\eta\Lambda)_0=2(n-m+1)$.
Then we see that $\rank d\Lambda_0=n-m+1$, and
we see non-degeneracy.
Furthermore, $\eta\Lambda(0)=0$, so
we also see the $2$-singularity.
Thus it is enough to show that
$2$-non-degeneracy is equivalent to
$\red{\rank\,}d(\Lambda,\eta\Lambda)_0=2(n-m+1)$ at a $2$-singular point.

Let us assume that $0$ is $2$-singular.
\red{Since the dimension of $S(f)$ is $2m-n+1$,
and} by the $2$-singularity, it holds that
$\eta_0\in T_0S(f)$,
we take vector fields
$\xi_2,\ldots,\xi_{m}$ 
satisfying that $\eta,\xi_2,\ldots,\xi_{2m-n-\red{1}}$ at $0$ form
a basis of $T_0S(f)$.
By $S(f)=\{\Lambda=0\}$, 
it holds that
$\eta\Lambda=\xi_2\Lambda=\cdots=\xi_{2m-n-\red{1}}\Lambda=0$.
Thus
the transposition of the matrix representation 
of $d(\Lambda,\eta\Lambda)_0$ is
$$
\begin{array}{RCL}
&&
\left(
\begin{array}{ccc|ccc}
\eta\lambda_1&\cdots&\eta\lambda_{n-m+1}&
\eta^2\lambda_{1}&\cdots&\eta^2\lambda_{n-m+1}\\
\xi_2\lambda_1&\cdots&\xi_2\lambda_{n-m+1}&
\xi_2\eta\lambda_{1}&\cdots&\xi_2\eta\lambda_{n-m+1}\\
\vdots&\vdots&\vdots&\vdots&\vdots&\vdots\\
\xi_{2m-n-\red{1}}\lambda_1&\cdots&\xi_{2m-n-\red{1}}\lambda_{n-m+1}&
\xi_{2m-n-\red{1}}\eta\lambda_{1}&\cdots&\xi_{2m-n-\red{1}}
\eta\lambda_{n-m+1}\\
\hline
\xi_{\red{2m-n}}\lambda_1&\cdots&\xi_{\red{2m-n}}\lambda_{n-m+1}&
\xi_{\red{2m-n}}\eta\lambda_{1}&\cdots&\xi_{\red{2m-n}}\eta\lambda_{n-m+1}\\
\vdots&\vdots&\vdots&\vdots&\vdots&\vdots\\
\xi_{m}\lambda_1&\cdots&\xi_{m}\lambda_{n-m+1}&
\xi_{m}\eta\lambda_{1}&\cdots&\xi_{m}\eta\lambda_{n-m+1}
\end{array}
\right)(0)\\[23mm]
&=:&
\left(
\begin{array}{c|c}
O&J_2 \\
\hline
J_1&*
\end{array}
\right)(0).
\end{array}
$$
By the non-degeneracy, $\rank J_1(0)=n-m+1$ holds.
Hence $\rank d(\Lambda,\eta\Lambda)_0=2(n-m+1)$ is
equivalent to
$\rank J_2(0)=n-m+1$.
Since $\eta,\xi_2,\ldots,\xi_{2m-n-\red{1}}$ at $0$ form
a basis of $T_0S(f)$, we see that
$\rank J_2(0)=n-m+1$ is equivalent to $2$-non-degeneracy.
\end{proof}
Let $0$ be a $2$-non-degenerate singular point of $f$.
We say that $0$ is {\em $3$-singular\/} if
$\eta_0\in T_0S_2(f)$ holds, namely,
$\eta^2\Lambda(0)=0$,
where $\eta^j\Lambda$ stands for $\eta\cdots\eta\Lambda$ ($j$ times).
If $0$ is $3$-singular, we set
$S_3(f)=\{p\in S_2(f)\,|\,\eta_p\in T_pS_2(f)\}$.
This does not depend on the choice of $\eta$,
and it holds that
$S_3(f)=\{p\in S_2(f)\,|\,\eta^2\Lambda(p)=0\}
=\{p\in (\R^m,0)\,|\,\Lambda(p)=\eta\Lambda(p)=\eta^2\Lambda(p)=0\}$.

Accordingly, we define higher order singularities and non-degeneracies
inductively.
For a fixed $1\leq i\leq m/(n-m+1)$, and for $j\leq i-1$,
assume that
$j$-singularity and $j$-non-degeneracy
of a singular point $0$ of $f$
are defined, and
$S_j(f)
=\{p\in S_{j-1}(f)\,|\,\eta_p\in T_pS_{j-1}\}
=\{p\in S_{j-1}(f)\,|\,\eta^{j-1}\Lambda(p)=0\}$ and
$d(\eta^{j-1}\Lambda)_0(T_0S_{j-1}(f))=T_0\R^{n-m+1}$
holds.
This condition implies that $S_j(f)$ is a manifold.

Let $0$ be an $(i-1)$-non-degenerate singular point of $f$.
We say that $0$ is {\em $i$-singular\/} if
$\eta_0\in T_0S_{i-1}$ holds.
We define $S_i=\{p\in S_{i-1}\,|\,\eta_p\in T_pS_{i-1}\}$.
Then since 
$S_{i-1}(f)=\{p\in S_{i-2}(f)\,|\,\eta^{i-2}\Lambda(p)=0\}$,
we see that
$S_i=
\{p\in S_{i-1}\,|\,\eta^{i-1}\Lambda=0\}$.

Let $0$ be an $i$-singular point of $f$.
We call $0$ is {\em $i$-non-degenerate\/} if
$$d(\eta^{i-1}\Lambda)_0(T_0S_{i-1}(f))=T_0\R^{n-m+1}$$
holds.
We show the following lemma.
\begin{lemma}\label{lem:inondeg}
For an\/ $i$-singular point,
the\/ $i$-non-degeneracy does not depend 
on the choice of\/ $\eta$.
\end{lemma}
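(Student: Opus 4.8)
The plan is to fix two null vector fields $\eta$ and $\tilde\eta$ and compare the two candidate non-degeneracy conditions directly. Since both span $\ker df$ along $S(f)$ and $\eta_0\neq0$, I extend $\eta$ to a local frame and write $\tilde\eta=a\eta+\nu$, where $a$ is a function with $a(0)\neq0$ and $\nu$ is a vector field vanishing on $S(f)$; indeed the frame components of $\nu$ are the complementary coefficients of $\tilde\eta$, which must vanish on $S(f)$ because $\tilde\eta$ is proportional to $\eta$ there. Recall also that, by the inductive construction, the manifolds $S_1(f)\supset\cdots\supset S_{i-1}(f)$ and their tangent spaces do not depend on the choice of null vector field, so $T_0S_{i-1}(f)$ is unambiguous; I will reprove this invariance en route. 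The goal is then to show that $d(\eta^{i-1}\Lambda)_0$ and $d(\tilde\eta^{i-1}\Lambda)_0$ have the same image when restricted to $T_0S_{i-1}(f)$.

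The heart of the argument is the inductive claim that, for $0\le j\le i-1$,
$$\tilde\eta^{\,j}\Lambda=a^{j}\,\eta^{\,j}\Lambda+R_j,\qquad R_j|_{S_j(f)}=0.$$
For $j=0$ this is trivial, and for $j=1$ it reads $\tilde\eta\Lambda=a\,\eta\Lambda+\nu\Lambda$ with $\nu\Lambda|_{S(f)}=0$ since every component of $\nu$ vanishes on $S(f)$; this already yields $S_2(f)=\{\eta\Lambda=0\}\cap S(f)$ independently of $\eta$. For the inductive step I apply $\tilde\eta=a\eta+\nu$ to $\tilde\eta^{\,j}\Lambda=a^{j}\eta^{\,j}\Lambda+R_j$ and collect the correction into $R_{j+1}=(\tilde\eta a^{j})\,\eta^{\,j}\Lambda+a^{j}\,\nu(\eta^{\,j}\Lambda)+\tilde\eta R_j$. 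On $S_{j+1}(f)$ the first term dies because $\eta^{\,j}\Lambda=0$ there, the second because $\nu$ vanishes on $S(f)\supset S_{j+1}(f)$, and the third because $\tilde\eta$ is tangent to $S_j(f)$ at points of $S_{j+1}(f)$ — indeed $\eta_p\in T_pS_j(f)$ by the very definition of $S_{j+1}(f)$, and $\tilde\eta_p=a(p)\eta_p$ on $S(f)$ — so $\tilde\eta R_j$ is a derivative of a function vanishing on $S_j(f)$ taken in a direction tangent to $S_j(f)$. This closes the induction and, along the way (using $a\neq0$), shows that each $S_{j+1}(f)$ is independent of $\eta$.

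Finally I take $j=i-1$. At the $i$-singular point one has $\eta^{i-1}\Lambda(0)=0$, and $R_{i-1}$ vanishes on $S_{i-1}(f)$; hence for any $v\in T_0S_{i-1}(f)$ the product rule gives $d(\tilde\eta^{i-1}\Lambda)_0(v)=a(0)^{\,i-1}\,d(\eta^{i-1}\Lambda)_0(v)$, the terms $d(a^{i-1})_0(v)\,\eta^{i-1}\Lambda(0)$ and $dR_{i-1,0}(v)$ both vanishing. Since $a(0)\neq0$, the two restricted differentials differ by a nonzero scalar, so $d(\tilde\eta^{i-1}\Lambda)_0(T_0S_{i-1}(f))=d(\eta^{i-1}\Lambda)_0(T_0S_{i-1}(f))$, and the condition that this image be all of $T_0\R^{n-m+1}$ is insensitive to the choice of $\eta$. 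The main obstacle is the bookkeeping in the inductive step, specifically the term $\tilde\eta R_j$: the proof hinges on recognizing that $\tilde\eta$ is tangent to $S_j(f)$ exactly on the next stratum $S_{j+1}(f)$, which is what lets a correction term known only to vanish on $S_j(f)$ contribute nothing after one further differentiation along $\tilde\eta$.
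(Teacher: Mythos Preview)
Your proof is correct and follows essentially the same approach as the paper's: write $\tilde\eta=a\eta+\nu$ with $\nu|_{S(f)}=0$, then prove by induction that $\tilde\eta^{\,j}\Lambda$ and $a^{j}\eta^{\,j}\Lambda$ agree on $S_j(f)$, and conclude by differentiating along $T_0S_{i-1}(f)$ at the $i$-singular point. The only cosmetic difference is that the paper applies $\eta$ (after splitting off $\beta$) in the inductive step whereas you apply $\tilde\eta$, and you package the error as an explicit remainder $R_j$; both choices lead to the same three vanishing terms on $S_{j+1}(f)$.
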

\begin{proof}
Let $\tilde \eta=\alpha\eta+\beta$,
where $\alpha$ is a non-zero function
and $\beta$ is a vector field satisfying
$\beta=0$ on $S(f)$.
It is enough to show that
$$
\tilde\eta^{i-1}\Lambda=\alpha^{i-1}\eta^{i-1}\Lambda
\quad(\text{on}\ S_{i-1}(f)).
$$
We show this by induction.
If $i=2$, it is obvious.
We assume that
$\tilde\eta^{i-2}\Lambda=\alpha^{i-2}\eta^{i-2}\Lambda$
holds on $S_{i-2}(f)$.
Then 
\begin{equation}\label{eq:inondeg}
\begin{array}{rcl}
\tilde\eta^{i-1}\Lambda-\alpha^{i-1}\eta^{i-1}\Lambda
&=&
(\alpha\eta+\beta)\tilde\eta^{i-2}\Lambda-\alpha^{i-1}\eta^{i-1}\Lambda\\
&=&
\alpha\Big(
\eta\underline{\big(\tilde\eta^{i-2}\Lambda-\alpha^{i-2}\eta^{i-2}\Lambda\big)}
+
\eta(\alpha^{i-2})\eta^{i-2}\Lambda\Big)
+
\beta\tilde\eta^{i-2}\Lambda
\end{array}
\end{equation}
holds.
Since the underlined part of \eqref{eq:inondeg} vanishes on $S_{i-2}(f)$, 
and $S_{i-1}(f)=\{\eta\in TS_{i-2}\}$,
and
$\eta^{i-2}\Lambda=0$ on $S_{i-1}$,
the right hand side of \eqref{eq:inondeg} vanishes
on $S_{i-1}(f)$.
\end{proof}
This procedure can be continued when
$i\leq m/(n-m+1)$.
We see that
$$S_i(f)=(\Lambda,\eta\Lambda,\ldots,\eta^{i-1}\Lambda)^{-1}(0).$$
More precisely, we have the following lemma.
\begin{lemma}\label{lem:key}
Let\/ $f:(\R^m,0)\to(\R^n,0)$ be a map-germ satisfying\/
 $\rank df_0=m-1$.
The\/ $(i+1)$-non-degeneracy of a singular point\/ $0$ is
equivalent to\/
$$
(\Lambda,\eta\Lambda,\ldots,\eta^i\Lambda)=0\quad \text{and}\quad
\rank d(\Lambda,\eta\Lambda,\ldots,\eta^i\Lambda)_0=(i+1)(n-m+1).
$$
\end{lemma}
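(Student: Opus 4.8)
The plan is to argue by induction on $i$, in direct parallel with the proof of Lemma~\ref{lem:2nondeg}, which is precisely the case $i=1$. The case $i=0$ is merely the definition of non-degeneracy (a $1$-non-degenerate point is a singular point with $\rank d\Lambda_0=n-m+1$), so suppose the statement holds with $i$ replaced by $i-1$; that is, $i$-non-degeneracy is equivalent to $(\Lambda,\ldots,\eta^{i-1}\Lambda)=0$ together with $\rank d(\Lambda,\ldots,\eta^{i-1}\Lambda)_0=i(n-m+1)$. Throughout I would freely use Lemma~\ref{lem:inondeg}, which lets me fix a convenient $\eta$ and guarantees well-definedness.

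First I would reduce both implications to a single statement at an $(i+1)$-singular, $i$-non-degenerate point. If the rank condition $\rank d(\Lambda,\ldots,\eta^i\Lambda)_0=(i+1)(n-m+1)$ holds, then deleting the last $n-m+1$ component functions can drop the rank by at most $n-m+1$, so $\rank d(\Lambda,\ldots,\eta^{i-1}\Lambda)_0\geq i(n-m+1)$; since the target has dimension $i(n-m+1)$, equality holds, and the induction hypothesis yields $i$-non-degeneracy, while $\eta^i\Lambda(0)=0$ gives $(i+1)$-singularity. Conversely, $(i+1)$-non-degeneracy presupposes $i$-non-degeneracy and $(i+1)$-singularity by the inductive definition. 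In either case it remains to show, at such a point, that $(i+1)$-non-degeneracy is equivalent to $\rank d(\Lambda,\ldots,\eta^i\Lambda)_0=(i+1)(n-m+1)$.

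For this I would set $d=\dim S_i(f)=m-i(n-m+1)$ and, using $\eta_0\in T_0S_i(f)$, choose $\xi_2,\ldots,\xi_d$ so that $\eta,\xi_2,\ldots,\xi_d$ form a basis of $T_0S_i(f)$, extended by $\xi_{d+1},\ldots,\xi_m$ to a basis of $T_0\R^m$. Writing the transpose of $d(\Lambda,\ldots,\eta^i\Lambda)_0$ in these rows, with columns grouped as the $i$ blocks $\Lambda,\ldots,\eta^{i-1}\Lambda$ followed by the block $\eta^i\Lambda$, produces a matrix of the shape $\pmt{O&J_2\\ J_1&*}$. The upper-left block vanishes because each of $\Lambda,\ldots,\eta^{i-1}\Lambda$ is identically zero on $S_i(f)$, so its differential annihilates $T_0S_i(f)$; here $J_1$ is the square $i(n-m+1)\times i(n-m+1)$ block and $J_2$ is $d\times(n-m+1)$. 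Since the transpose of $d(\Lambda,\ldots,\eta^{i-1}\Lambda)_0$ is then $\pmt{O\\ J_1}$, the induction hypothesis is exactly the statement that $J_1$ is nonsingular.

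The finish is bookkeeping with this block form. Granting $J_1$ nonsingular, the column reduction $\pmt{O&J_2\\ J_1&*}\mapsto\pmt{O&J_2\\ J_1&O}$ shows $\rank d(\Lambda,\ldots,\eta^i\Lambda)_0=i(n-m+1)+\rank J_2$, so the full rank equals $(i+1)(n-m+1)$ exactly when $\rank J_2=n-m+1$. The entries of $J_2$ are the directional derivatives of the components of $\eta^i\Lambda$ along the basis $\eta,\xi_2,\ldots,\xi_d$ of $T_0S_i(f)$, so $\rank J_2=n-m+1$ says precisely $d(\eta^i\Lambda)_0(T_0S_i(f))=T_0\R^{n-m+1}$, which is the definition of $(i+1)$-non-degeneracy. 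I expect the main obstacle to be the induction bookkeeping in the reduction step — checking that the level-$(i+1)$ rank hypothesis forces exactly the level-$i$ data needed both to invoke the induction hypothesis and to know $J_1$ is nonsingular — rather than the essentially linear-algebraic block computation itself.
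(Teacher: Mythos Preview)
Your proof is correct and follows essentially the same inductive strategy as the paper: reduce to an $(i+1)$-singular, $i$-non-degenerate point, write $d(\Lambda,\ldots,\eta^i\Lambda)_0$ in a basis adapted to the tangent space of the singular stratum, and read off the equivalence from the resulting block-triangular shape. The only cosmetic difference is that the paper uses a finer nested basis adapted to all of the $T_0S_k(f)$ simultaneously (yielding a staircase of blocks $J_1,\ldots,J_j$), whereas you collapse the first $i$ levels into a single square block $J_1$ and invoke the induction hypothesis for its nonsingularity---this is arguably cleaner and avoids the extra bookkeeping.
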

\begin{proof}
We show the necessity by induction.
By Lemma \ref{lem:2nondeg}, we have $2$-non-degeneracy.
Let us assume that $j$-non-degeneracy ($j\leq i$) is proven.
The $(j+1)$-singularity of $0$ follows immediately from
$\eta^j\Lambda(0)=0$ for $j\leq i$.
We show $(j+1)$-non-degeneracy.
By $j$-non-degeneracy, we have submanifolds
$$
S_j\subset S_{j-1}\subset\cdots\subset S_1\subset (\R^m,0).
$$
We take a basis of each tangent space at $0$ as follows:
$\Xi_j=\{\xi_{j,1},\ldots,\xi_{j,m-j(n-m+1)}\}$
is a basis of $T_0S_j$,
$\Xi_{ k}=\{\xi_{k,1},\ldots,\xi_{k,n-m+1}\}\cup \Xi_{k+1}$
is a basis of $T_0S_{k}$ $(k=j-1,\ldots,1)$, and
$\Xi_{0}=\{\xi_{0,1},\ldots,\xi_{0,n-m+1}\}\cup \Xi_{1}$
is a basis of $T_0\R^m$.
Since $S_k(f)=\{\Lambda=\eta\Lambda=\cdots=\eta^{k-1}\Lambda=0\}$
$(1\leq k\leq j)$,
if $\xi\in T_0S_k(f)$, then $\xi\Lambda=\cdots\xi\eta^{k-1}\Lambda=0$
holds at $0$.
Thus
the transposition of the matrix representation 
of $d(\Lambda,\eta\Lambda,\ldots,\eta^j\Lambda)_0$ is
\newcommand{\fdm}[1]{\makebox[3em][c]{$#1$}}
$$
\begin{array}{cc}
&
\phantom{\Bigg(}
\begin{array}{ccccc}
\fdm{\Lambda}&\fdm{\eta\Lambda}&\fdm{\cdots}
&\fdm{\eta^{j-1}\Lambda}&\fdm{\eta^j\Lambda}\\
\vdots&\vdots&\cdots&\vdots&\vdots
\end{array}
\phantom{\Bigg)}
\\
\begin{array}{cc}
\Xi_j&\cdots\\
\Xi_{j-1}&\cdots\\
\vdots\\
\Xi_1&\cdots\\
\Xi_0&\cdots
\end{array}
&
\left(
\begin{array}{c|c|c|c|c}
\fdm{O}&\fdm{O}&\fdm{\cdots}&\fdm{O}&\fdm{J_j}\\
\hline
O&O&\cdots&J_{j-1}\\
\hline
\vdots&\vdots&\rotatebox{90}{$\ddots$}\\
\hline
O&J_2\\
\hline
J_1
\end{array}
\right),
\end{array}
$$
where
$$
\begin{array}{ccc}
&&\eta^k\Lambda\\
&&\vdots\\
\Xi_l&\cdots&A
\end{array}
$$
means that $A$ is a matrix formed by
differentials of $\eta^k\Lambda=\eta^k(\lambda_1,\ldots,\lambda_{n-m+1})$
by $\Xi_l=(\xi_{l,1},\ldots,\xi_{l,L})$.
Then we see that $\rank J_j=n-m+1$,
and this implies $(j+1)$-non-degeneracy.
\end{proof}
\begin{theorem}\label{thm:cri}
The map-germ\/ $f:(\R^m,0)\to(\R^n,0)$
is an\/ $r$-Morin singularity
if and only if\/ $0$ is\/ $r$-non-degenerate 
but not\/ $r$-singular.
\end{theorem}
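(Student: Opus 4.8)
The plan is to prove the two directions separately, reading the hypotheses through Lemma~\ref{lem:key}: the point $0$ is $r$-non-degenerate exactly when $(\Lambda,\eta\Lambda,\dots,\eta^{r-1}\Lambda)(0)=0$ and $\rank d(\Lambda,\eta\Lambda,\dots,\eta^{r-1}\Lambda)_0=r(n-m+1)$, while the non-singularity hypothesis is encoded by $\eta^r\Lambda(0)\neq0$, the condition separating the $r$-Morin stratum from the higher ones.

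For necessity, note that by Lemmas~\ref{lem:nondeg} and~\ref{lem:inondeg} all of these conditions are invariant under coordinate changes in the source and target, so it suffices to check them for the model $h_{0,r}$ itself. There $\eta=\partial_{x_m}$ and $\lambda_i=\partial h_i/\partial x_m$, hence $\eta^k\lambda_i=\partial^{k+1}h_i/\partial x_m^{k+1}$. A direct computation gives $\eta^k\Lambda(0)=0$ for $0\le k\le r-1$ and $\eta^r\Lambda(0)=(0,\dots,0,(r+1)!)\neq0$. Moreover, since each variable $x_{(i-1)r+j}$ enters $h_i$ linearly, the differentials $d(\eta^k\lambda_i)_0$ are, up to nonzero scalars, the distinct coordinate covectors $dx_1,\dots,dx_{r(n-m+1)-1},dx_m$; these are independent, so the rank is exactly $r(n-m+1)$. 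This is the easy direction.

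For sufficiency I would first bring $f$ into the pre-normal form~\eqref{eq:normal}, so that $\eta=\partial_{x_m}$, $F_i:=f_{m-1+i}$ and $\lambda_i=\partial_{x_m}F_i$. Taylor expansion in $x_m$ with smooth remainder writes $F_i=\sum_{j=0}^{r+1}c_{i,j}(x')\,x_m^{\,j}+x_m^{\,r+2}\rho_i$ with $x'=(x_1,\dots,x_{m-1})$, and a target diffeomorphism subtracting $c_{i,0}(X_1,\dots,X_{m-1})$ removes the $x_m$-independent part. The hypotheses become $c_{i,j}(0)=0$ for $1\le j\le r$ together with $(c_{i,r+1}(0))_i\neq0$; after a linear target change I may assume $c_{n-m+1,r+1}(0)\neq0$ and $c_{i,r+1}(0)=0$ for $i<n-m+1$. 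The rank condition then says precisely that the $r(n-m+1)-1$ functions $c_{i,j}$ with $1\le j\le r$ and $(i,j)\neq(n-m+1,r)$ have independent differentials at $0$; taking them as part of a coordinate system on the $x'$-space and renaming them $x_{(i-1)r+j}$ matches the coefficient pattern of $h_{0,r}$.

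The main obstacle is the final normalization of the top-order and tail terms. Because $c_{n-m+1,r+1}(0)\neq0$, the distinguished component $F_{n-m+1}$ is regular of order $r+1$ in the null direction, so by the Malgrange preparation theorem it equals a unit times a monic degree-$(r+1)$ polynomial in $x_m$ whose constant term already vanishes, being $F_{n-m+1}(x',0)=c_{n-m+1,0}=0$. A Tschirnhaus reparametrization of the null coordinate $x_m$ (a source diffeomorphism tangent to the identity, since the shift vanishes at $0$) then removes the $x_m^{\,r}$ term and normalizes the leading coefficient to $1$, and a target diffeomorphism uses $F_{n-m+1}$ to clear the residual $x_m^{\,r+1}$ terms and the higher-order tails in the remaining $F_i$ $(i\le n-m)$, reducing each to $\sum_{j=1}^r x_{(i-1)r+j}x_m^{\,j}$. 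The delicate point, and where I expect the real work to lie, is coordinating this null-direction reparametrization with the target changes so that the coordinate functions fixed in the previous step are preserved and the constant-in-$x_m$ term reintroduced by the Tschirnhaus shift is absorbed; this is exactly the place where $\eta^r\Lambda(0)\neq0$ (equivalently $c_{n-m+1,r+1}(0)\neq0$) is indispensable, since it is what licenses the preparation and division with $x_m^{\,r+1}$ as the controlling order.
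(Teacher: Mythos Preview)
Your necessity argument is correct, and the overall toolkit for sufficiency (Malgrange preparation, a Tschirnhaus shift in the null variable, and the rank hypothesis to manufacture source coordinates) is exactly what the paper uses, following Morin. The gap is in how you deploy these tools. First, you commit to source coordinates built from the Taylor coefficients $c_{i,j}(x')$ \emph{before} doing the Tschirnhaus shift; but the shift $x_m\mapsto x_m+g(x')$ scrambles the $x_m$-expansion of every $F_i$, so the clean form $\sum_j x_{(i-1)r+j}\,x_m^{\,j}$ you just arranged is lost and has to be redone anyway. Second, and more seriously, your ``clear the tails via a target diffeomorphism'' step is not justified: writing $F_{n-m+1}=u\cdot P$ (Weierstrass form) and then dividing $F_i=q_iF_{n-m+1}+R_i$ produces a unit $u$ and quotients $q_i$ that are functions on the \emph{source}. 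To subtract $q_iF_{n-m+1}$ from $F_i$ by a target change you would need $q_i$ to factor through $(X_1,\dots,X_n)$, which it does not in general (the map is not injective near a Morin point). The same obstruction blocks replacing $F_{n-m+1}$ by $P$.

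The paper's cure is to use Malgrange in the module form: for each component $f_i$ one writes
\[
\tilde x_m^{\,r+1}=\beta_{i,0}\bigl(x',f_i\bigr)-\sum_{k=1}^{R}\beta_{i,k}\bigl(x',f_i\bigr)\,\tilde x_m^{\,k},
\]
so that the coefficients $\beta_{i,k}$ are by construction functions of $(X_1,\dots,X_{m-1},X_{m-1+i})$ and hence assemble directly into a legitimate target diffeomorphism $\Theta$; the Tschirnhaus shift is performed \emph{first} (using $\alpha_{n,r}(x',f_n)$, again target-compatible) and only afterwards are the source coordinates chosen, as $\bar x_{(i-m)r+j}=\beta_{i,j}(x',f_i(x))$. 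Lemma~\ref{lem:morindiffeo} is precisely the check that these $\beta_{i,j}$ have independent differentials---your rank computation with the $c_{i,j}$ is the linearized version of it, but applied to the wrong coefficients. If you reorder your steps (Tschirnhaus first, then coordinates) and switch from the Weierstrass form to the module form of Malgrange, your outline becomes the paper's proof.
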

To prove this theorem,
the assumption does not depend on
the choice of coordinate system and
choice of null vector field,
we may assume that $f$ is of the form
\begin{equation}\label{eq:normal2}
f
(x_1,\ldots,x_m)
=
\big(x_1,\ldots,x_{m-1},f_m(x_1,\ldots,x_m),\ldots,
f_n(x_1,\ldots,x_m)\big),
\end{equation}
and $\eta=\partial x_m$.
Then $\Lambda=(f_m',\ldots,f_n')(x_1,\ldots,x_m)$
holds,
where $'=\partial/\partial x_m$.
Then the theorem 
follows directly from \red{the following lemma due to
Morin.}
\begin{lemma}{\rm (Morin, \cite[p 5663, Lemme]{morin})}\label{lem:morin}
\red{Let\/ $f:(\R^m,0)\to(\R^n,0)$ is a map-germ written
in the form\/ \eqref{eq:normal2}.
Then\/ $f$ at\/ $0$ is an\/ $r$-Morin singularity
if and only if\/ 
$(f_m^{(j)},\ldots,f_n^{(j)})(0)=0$ $(1\leq j\leq r)$ and\/ 
$(f_m^{(r+1)},\ldots,f_n^{(r+1)})(0)\ne0$ hold,
and\/ 
$
\rank d(F, F',\ldots,F^{(r-1)})_{0}=r(n-m+1)
$ holds,
where\/  $F=(f_m',\ldots,f_n')$.
}
\end{lemma}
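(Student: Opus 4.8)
The plan is to prove the two implications separately. Throughout write $g=(f_m,\dots,f_n)=(g_1,\dots,g_{n-m+1})$ and record the Taylor data of each $g_i$ in the distinguished direction $x_m$ by the functions $g_i^{(k)}=\partial_{x_m}^k g_i$; recall that for a germ of the form \eqref{eq:normal2} one has $\Lambda=F$ and $\eta^j\Lambda=F^{(j)}$, where $F=(g_1',\dots,g_{n-m+1}')$.

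Necessity is a direct computation followed by an invariance argument. For the normal form $h_{0,r}$ one finds $h_i^{(k)}(\,\cdot\,,0)=k!\,x_{(i-1)r+k}$ for $1\le k\le r$ and $i\le n-m$, $h_{n-m+1}^{(k)}(\,\cdot\,,0)=k!\,x_{(n-m)r+k}$ for $1\le k\le r-1$, $h_{n-m+1}^{(r)}=(r+1)!\,x_m$, and $h_{n-m+1}^{(r+1)}(0)=(r+1)!$. Hence $(h_m^{(j)},\dots,h_n^{(j)})(0)=0$ for $1\le j\le r$, $(h_m^{(r+1)},\dots,h_n^{(r+1)})(0)\ne0$, and the $r(n-m+1)$ differentials $d(h_i^{(k)})_0$ $(1\le k\le r)$ are distinct coordinate covectors (the direction $dx_m$ being contributed by $h_{n-m+1}^{(r)}$), so the rank is exactly $r(n-m+1)$. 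Since these three conditions are, via $\Lambda=F$ and $\eta^j\Lambda=F^{(j)}$, identical with the conditions ``$r$-non-degenerate and $\eta^r\Lambda(0)\ne0$'', which Lemmas \ref{lem:nondeg}--\ref{lem:key} show to be independent of the coordinates and of $\eta$, every $f$ that is $\A$-equivalent to $h_{0,r}$ inherits them.

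For sufficiency I would reduce $f$ to $h_{0,r}$ by explicit changes of coordinates. The hypothesis $(f_m^{(r+1)},\dots,f_n^{(r+1)})(0)\ne0$ supplies a nonzero vector $(g_1^{(r+1)}(0),\dots,g_{n-m+1}^{(r+1)}(0))$; after a linear change of the last $n-m+1$ target coordinates---which preserves the form \eqref{eq:normal2}---this vector points in the last direction, so that $g_{n-m+1}$ carries the $x_m^{r+1}$ term while $g_1,\dots,g_{n-m}$ do not. By the vanishing hypothesis $(f_m^{(j)},\dots,f_n^{(j)})(0)=0$ $(1\le j\le r)$, the function $g_{n-m+1}'$ vanishes to order exactly $r$ in $x_m$ at the origin, so the Malgrange preparation theorem in the variable $x_m$, together with a source change in $x_m$ (with $x'=(x_1,\dots,x_{m-1})$ as parameters) that absorbs the resulting unit, brings $g_{n-m+1}$ into the $A_r$-form $x_m^{r+1}+\sum_{j=1}^{r-1}c_j(x')x_m^j$ with $c_j(0)=0$; expanding each remaining $g_i$ in the new $x_m$ and dividing gives $g_i=\sum_{j=1}^{r}c_{i,j}(x')x_m^j$ modulo terms of order $>r$ in $x_m$. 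The rank hypothesis $\rank d(F,\dots,F^{(r-1)})_0=r(n-m+1)$ is exactly what guarantees that the $r(n-m+1)$ functions $g_i^{(k)}$ $(1\le k\le r)$ have independent differentials at the origin, hence form part of a coordinate system; after the preparation they become the free coefficients of \eqref{eq:morinnor2} together with the normalized singular direction $x_m$. Renaming them, $f$ agrees with $h_{0,r}$ to the relevant order, and since $h_{0,r}$ is stable, hence finitely $\A$-determined, the remaining higher-order terms lie in the tangent space of its $\A$-orbit and are removed by a final $\A$-equivalence.

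The main obstacle is the sufficiency, and within it two points. First, the preparation and division must be carried out for all $n-m+1$ components simultaneously while using only target changes that respect the semi-normal form \eqref{eq:normal2}, so the source change that absorbs the units and the fibered target changes that clean up the components $g_i$ $(i\le n-m)$ must be arranged consistently. Second---and this is where the rank hypothesis does the real work---one must verify that the functions produced assemble into exactly the $r(n-m+1)$ independent coordinates of the versal model $h_{0,r}$, rather than merely preparing the germs one at a time; once the jet has been normalized this way, the finite-determinacy step itself is routine.
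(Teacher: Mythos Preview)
Your proposal follows Morin's preparation-and-rank strategy, as does the paper, but with two differences in execution worth noting. First, the paper (following Morin) begins with a linear change making \emph{every} $f_i^{(r+1)}(0)\ne 0$, not just the last one; this permits applying the Malgrange preparation theorem to each component $f_i$ separately, yielding exact identities $\tilde x_m^{r+1}=\beta_{i,0}(\tilde x',f_i)-\sum_k\beta_{i,k}(\tilde x',f_i)\,\tilde x_m^k$ rather than merely equalities of jets. Second, and as a consequence, the paper constructs the normalizing diffeomorphisms $\theta,\Theta$ explicitly (equations \eqref{eq:diff1}, \eqref{eq:diff2}) and never appeals to finite determinacy. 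Your shortcut---preparing only the last component and then invoking the $(r+1)$-determinacy of the stable germ $h_{0,r}$---is a legitimate alternative, trading explicitness for brevity; the ``simultaneous preparation'' obstacle you flag is precisely what Morin's trick of making all top derivatives nonzero is designed to circumvent. Your second obstacle, the verification that the prepared coefficients furnish independent coordinates, is carried out in the paper as Lemma~\ref{lem:morindiffeo}, and your reasoning there matches it.
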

We give a proof of Theorem \ref{thm:cri}
here for the sake of those readers
who are not familiar with 
singularity theory.
\red{The proof is based on \cite[p 5664-5665]{morin}.
The proof is a little complicated, thus we 
would like to state a short sketch of it previously.
By the usual usage of the Malgrange preparation theorem and
by Tschirnhaus transformation,
one may assume that $f$ has the form
$$
\big(x_1,\ldots,x_{m-1},g_1(x),\ldots,
g_{n-m+1}(x)\big),
$$ where $x=(x_1,\ldots,x_m)$ and
\begin{equation}\label{eq:prfmorin}
g_i(x)=
\sum_{j=1}^r\tilde g_{ij}(x)x_m^j\ (i=1,\ldots,n-m),\quad
g_{n-m+1}(x)=
\sum_{j=1}^{r-1}\tilde g_{n-m+1,j}(x)x_m^j+x_m^{r+1}.
\end{equation}
If the coordinate change on the source 
$(x_1,\ldots,x_m)\mapsto(\tilde x_1,\ldots,\tilde x_m)$
defined by
$$
\begin{array}{l}
\tilde x_1=\tilde g_{11}(x),\ldots,
\tilde x_r=\tilde g_{1r}(x),
\tilde x_{r+1}=\tilde g_{21}(x),\ldots,
\tilde x_{2r}=\tilde g_{2r}(x),\ldots,\\
\hspace{20mm}
\tilde x_{r(n-m)+1}=\tilde g_{n-m+1,1}(x),\ldots,
\tilde x_{r(n-m)+r-1}=\tilde g_{n-m+1,r-1}(x),\\
\hspace{40mm}
\tilde x_{r(n-m)+r}=x_{r(n-m)+r},\ldots,
\tilde x_m=x_m
\end{array}
$$
is allowed,
then \eqref{eq:prfmorin} can be written in the form
$$
g_i(x)=
\sum_{j=1}^r\tilde x_{i-1+j}\tilde x_m^j\ (i=1,\ldots,n-m),\quad
g_{n-m+1}(x)=
\sum_{j=1}^{r-1}\tilde x_{n-m+j}(x)\tilde x_m^j+\tilde x_m^{r+1}.
$$
Then by a suitable coordinate change on the target,
the claim is proven.
Most of the proof is occupied to show
that these coordinate changes are regular.}
\begin{proof}[Proof of Theorem\/ {\rm \ref{thm:cri}}]
By $(r-1)$-singularity and non $r$-singularity,
$(f_m^{(j)},\ldots,f_n^{(j)})(0)=0$ $(1\leq j\leq r)$, and
$(f_m^{(r+1)},\ldots,f_n^{(r+1)})(0)\ne0$ holds.
By a linear transformation, 
we may assume $f_i^{(r+1)}(0)\ne0$ for all $m\leq i\leq n$.
We consider a quotient space
\begin{equation}\label{eq:morinring}
{\cal M}_m\big/\big\langle x_1,\ldots,x_{m-1},f_i(x)
\big\rangle_{{\cal M}_m}
=
\langle x_{m}^{r+1}\rangle_{{\cal M}_m}\quad
(m\leq i\leq n),
\end{equation}
where ${\cal M}_m=\{f:(\R^m,0)\to(\R,0)\}$ is a ring of 
function-germs.
Then by the Malgrange preparation theorem,
there exist functions
$\alpha_{n,k}$ $(0\leq k\leq r)$
such that
\begin{equation}\label{eq:morinpf1}
x_m^{r+1}
=
\alpha_{n,0}\big(x_1,\ldots,x_{m-1},f_n(x)\big)
-
\sum_{k=1}^r\alpha_{n,k}\big(x_1,\ldots,x_{m-1},f_n(x)\big)
x_m^k
\end{equation}
holds.
We consider a diffeomorphism-germ
$$
\psi(x_1,\ldots,x_m)=\left(
x_1,\ldots,x_{m-1},
x_m+\dfrac{1}{r}\alpha_{n,r}\big(x_1,\ldots,x_{m-1},f_n(x)\big)\right),
$$
and set
$\tilde x=\psi(x)$,
where 
$\tilde x=(\tilde x_1,\ldots,\tilde x_m)$ and
$x=(x_1,\ldots,x_m)$.
We remark that $\psi^{-1}$ has the form
$\psi^{-1}(\tilde x)
=\big(\tilde x_1,\ldots,\tilde x_{m-1},\psi_m^{-1}(\tilde x)\big)$.
Then by a calculation,
we see that there exist functions
$\beta_{n,k}$ $(0\leq k\leq r-1)$
such that
\begin{equation}\label{eq:morinpf2}
\tilde x_m^{r+1}
=
\beta_{n,0}\Big(
\tilde x_1,\ldots,\tilde x_{m-1},f_n\big(\psi^{-1}(\tilde x)\big)
\Big)
-
\sum_{k=1}^{r-1}
\beta_{n,k}\Big(
\tilde x_1,\ldots,\tilde x_{m-1},f_n\big(\psi^{-1}(\tilde x)\big)
\Big)
\tilde x_m^k
\end{equation}
holds.
Again by \eqref{eq:morinring}, there exist functions
$\beta_{i,k}$ $(0\leq k\leq r,\ m\leq i\leq n-1)$
such that
\begin{equation}\label{eq:morinpf3}
\tilde x_m^{r+1}
=
\beta_{i,0}\Big(
\tilde x_1,\ldots,\tilde x_{m-1},f_i\big(\psi^{-1}(\tilde x)\big)
\Big)
-
\sum_{k=1}^{r}
\beta_{i,k}\Big(
\tilde x_1,\ldots,\tilde x_{m-1},f_i\big(\psi^{-1}(\tilde x)\big)
\Big)
\tilde x_m^k
\end{equation}
holds.
Differentiating 
\eqref{eq:morinpf2} and \eqref{eq:morinpf3} $r+1$ times 
by $\tilde x_m$,
we see that 
$$\dfrac{\partial}{\partial y}
\beta_{n,0}(x_1,\ldots,x_{m-1},y)\ne0,\quad
\dfrac{\partial}{\partial y}
\beta_{i,0}(x_1,\ldots,x_{m-1},y)\ne0\quad
(m\leq i\leq n-1)
$$
at $0$.
Moreover, we have the following lemma.
\begin{lemma}\label{lem:morindiffeo}
Vectors 
$$
\begin{array}{l}
d\beta_{m,1}(x_1,\ldots,x_{m-1},0),\ldots,
d\beta_{m,r}(x_1,\ldots,x_{m-1},0),\\
d\beta_{m+1,1}(x_1,\ldots,x_{m-1},0),\ldots,
d\beta_{m+1,r}(x_1,\ldots,x_{m-1},0),\\
\hspace{10mm}\ldots,\\
d\beta_{n-1,1}(x_1,\ldots,x_{m-1},0),\ldots,
d\beta_{n-1,r}(x_1,\ldots,x_{m-1},0),\\
d\beta_{n,1}(x_1,\ldots,x_{m-1},0),\ldots,
d\beta_{n,r-1}(x_1,\ldots,x_{m-1},0)\\
\end{array}
$$
are linearly independent at\/ $0$.
\end{lemma}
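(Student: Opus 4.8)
The plan is to reduce the asserted independence to the non-degeneracy hypothesis. By Lemma~\ref{lem:key} (equivalently Lemma~\ref{lem:morin}), the $r$-non-degeneracy of $0$ means exactly that the $r(n-m+1)$ covectors $df_i^{(j)}(0)$ $(m\le i\le n,\ 1\le j\le r)$ are linearly independent in the cotangent space of $(\R^m,0)$, while $f_i^{(j)}(0)=0$ for $j\le r$ and $f_i^{(r+1)}(0)\ne0$. Among these covectors, the $x_m$-component of $df_i^{(j)}(0)$ is $f_i^{(j+1)}(0)$, which vanishes for $j\le r-1$ and is nonzero for $j=r$; so the covectors with $j\le r-1$ already lie in the span of $dx_1,\dots,dx_{m-1}$, whereas the $n-m+1$ covectors $df_i^{(r)}(0)$ each have a nonzero $dx_m$-component. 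I would keep this splitting in mind throughout.

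First I would record how the $\beta_{i,k}$ are tied to the jets of $\tilde f_i:=f_i(\psi^{-1}(\tilde x))$. Differentiating the defining relation (\eqref{eq:morinpf2} for $i=n$, \eqref{eq:morinpf3} for $m\le i\le n-1$) with respect to a source variable $\tilde x_p$ $(1\le p\le m-1)$ and comparing the coefficients of $\tilde x_m^0,\dots,\tilde x_m^r$ after setting $\tilde x_1=\dots=\tilde x_{m-1}=0$, the chain-rule corrections drop out at $0$ because $df_i(0)=0$ and $\tilde f_i$ vanishes to order $r+1$ along the $\tilde x_m$-axis. What survives is a triangular system expressing the differential of $\beta_{i,s}(x_1,\dots,x_{m-1},0)$ at $0$ as $\tfrac{1}{s!}\,(\partial\beta_{i,0}/\partial y)(0)\,d(\tilde f_i^{(s)})(0)$ plus a combination of the $d(\tilde f_i^{(l)})(0)$ with $l<s$, where $y$ denotes the last argument. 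Since $(\partial\beta_{i,0}/\partial y)(0)\ne0$ was already established, this system is invertible for each fixed $i$, and the asserted independence becomes equivalent to the independence, in the variables $x_1,\dots,x_{m-1}$, of the family $\{\,d(\tilde f_i^{(j)})(0)\,\}$ over the same index ranges.

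Next I would pass from $\tilde f_i$ to $f_i$. Since $\psi$ fixes $x_1,\dots,x_{m-1}$ and only reparametrises $x_m$, a Fa\`a-di-Bruno expansion shows that for $j\le r-1$ the covector $d(\tilde f_i^{(j)})(0)$ is a triangular combination, with nonzero leading coefficient, of $df_i^{(1)}(0),\dots,df_i^{(j)}(0)$, with no coupling between different $i$, since every term that would bring in $f_n$ carries a factor $f_i^{(l+1)}(0)=0$ with $l\le r-1$. The decisive case is $j=r$: here the factor $f_i^{(r+1)}(0)\ne0$ survives, so the Tschirnhaus shift in $\psi$, which adds a multiple of $\alpha_{n,r}$ built from $f_n$, contributes a genuine $f_n$-term. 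Computing it amounts to evaluating $(\partial\alpha_{n,0}/\partial y)(0)$ from \eqref{eq:morinpf1}, and with the shift coefficient chosen so as to clear the $\tilde x_m^{\,r}$-term in \eqref{eq:morinpf2} one finds that $d(\tilde f_i^{(r)})(0)$ equals precisely $df_i^{(r)}(0)-\dfrac{f_i^{(r+1)}(0)}{f_n^{(r+1)}(0)}\,df_n^{(r)}(0)$, modulo the covectors $df_i^{(l)}(0)$, $l<r$, that already belong to the family. In other words, the normalisation performs exactly the elimination of the $dx_m$-direction against the pivot $df_n^{(r)}(0)$.

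It then remains to assemble the linear algebra. The family in question is an invertible transform of $\{df_i^{(j)}(0):m\le i\le n,\ 1\le j\le r-1\}$ together with $\{df_i^{(r)}(0)-\tfrac{f_i^{(r+1)}(0)}{f_n^{(r+1)}(0)}df_n^{(r)}(0):m\le i\le n-1\}$; adjoining $df_n^{(r)}(0)$ turns it back into the full independent system $\{df_i^{(j)}(0):1\le j\le r\}$ by a triangular change of basis, so the $r(n-m+1)-1$ covectors are independent, and by construction they all lie in the span of $dx_1,\dots,dx_{m-1}$. This proves the lemma. I expect the main obstacle to be the middle step, namely verifying that the Tschirnhaus shift produces exactly the elimination against $df_n^{(r)}(0)$ rather than some other multiple of it; this forces one to track the constants carefully, and it is precisely what explains the asymmetry in the statement, that $\beta_{n,k}$ runs only up to $k=r-1$, the covector $df_n^{(r)}(0)$ being consumed as the pivot of the normalisation.
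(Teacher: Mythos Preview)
Your argument is correct, and on the decisive point it is in fact sharper than the paper's own proof. Both proceed by differentiating \eqref{eq:morinpf2}--\eqref{eq:morinpf3} with respect to $\tilde x_m^{\,s}\tilde x_l$ and reading off $(\beta_{i,s})_{x_l}(0)$ in terms of the mixed partials of $\tilde f_i=f_i\circ\psi^{-1}$. The paper then asserts, ``by the same arguments'', a block-triangular relation $d\beta_{i,k}|_0=\sum_{j\le k}a_{i,kj}\,d(f_i^{(j)})|_0$ with $a_{i,kk}\ne0$ and \emph{no} coupling across the index $i$, and closes by saying that the restricted family $\{d(f_i^{(j)})(x_1,\dots,x_{m-1},0)\}$ over the stated ranges (omitting $(i,j)=(n,r)$) is ``full-rank by assumption''. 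However, the paper's explicit formulas for $k=1,2$ already tacitly use $k<r$ (they discard a term multiplied by $f_i^{(k+1)}(0)$), and at $k=r$ the Tschirnhaus shift $\psi$, being built from $f_n$, contributes the surviving term $f_i^{(r+1)}(0)\,(\psi_m^{-1})_{\tilde x_l}(0)$ to $(\tilde f_i)_{\tilde x_l\tilde x_m^{\,r}}(0)$, injecting a genuine multiple of $d(f_n^{(r)})$ into $d\beta_{i,r}$ for $i\ne n$. A concrete instance: with $m=2$, $n=3$, $r=1$, $f_2=x_2^{2}$, $f_3=x_1x_2+x_2^{2}$ one gets $d(f_2')|_{x_1}(0)=0$ while $d\beta_{2,1}|_{x_1}(0)\ne0$, so neither the block-triangularity nor the paper's final rank claim holds as written. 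Your computation that the coupling is exactly $-\dfrac{f_i^{(r+1)}(0)}{f_n^{(r+1)}(0)}\,df_n^{(r)}(0)$, i.e.\ the elimination of the $dx_m$-component against the pivot $df_n^{(r)}(0)$, is precisely what makes the argument go through: the resulting $r(n-m+1)-1$ covectors then all lie in $\langle dx_1,\dots,dx_{m-1}\rangle$ and are an invertible transform of an independent set in $(\R^m)^*$, so independence survives the restriction. One minor wording point: the lower-order remainder coming from $(\psi_m^{-1})_{\tilde x_l}(0)$ brings in $df_n^{(l)}$ for $l<r$ as well as $df_i^{(l)}$; both kinds are already in the family, so your reduction is unaffected.
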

\begin{proof}
Differentiating \eqref{eq:morinpf2} and \eqref{eq:morinpf3}
by $\tilde x_m$ and $\tilde x_l$ $(1\leq l\leq m-1)$,
we see that
$$
0=
(\beta_{i,0})_y(f_i)_{x_l}'(\psi_m^{-1})'-(\beta_{i,1})_{x_l}
\quad
(m\leq i\leq n)
$$
holds at $0$.
This implies that $d\beta_{i,1}(x_1,\ldots,x_{m-1},0)
=a_{i,11}df_i'(x_1,\ldots,x_{m-1},0)$ holds at $0$,
where $a_{i,11}\in\R$ is non-zero.
Again differentiating \eqref{eq:morinpf2} and \eqref{eq:morinpf3}
twice by $\tilde x_m$ and $\tilde x_l$ $(1\leq l\leq m-1)$,
we see that
$$
0=
(\beta_{i,0})_y(f_i)_{x_l}''((\psi_m^{-1})')^2
-(\beta_{i,1})_{y}(f_i)'_{x_l}(\psi_m^{-1})'
-2(\beta_{i,2})_{x_l}
\quad
(m\leq i\leq n)
$$
holds at $0$.
Thus it holds that
$$d\beta_{i,2}(x_1,\ldots,x_{m-1},0)
=a_{i,21}d(f_i')(x_1,\ldots,x_{m-1},0)
+a_{i,22}d(f_i'')(x_1,\ldots,x_{m-1},0)$$ at $0$,
where $a_{i,22},a_{i,21}\in\R$ and $a_{i,22}\ne0$.
By the same arguments,
we see that
$$
\begin{array}{L}
d\beta_{i,k}(x_1,\ldots,x_{m-1},0)
=\sum_{j=1}^ka_{i,kj}d\big(f_i^{(j)}\big)(x_1,\ldots,x_{m-1},0)
\ 
(1\leq k\leq r,\ m\leq i\leq n-1),\\
d\beta_{n,k}(x_1,\ldots,x_{m-1},0)
=\sum_{j=1}^ka_{n,kj}d\big(f_n^{(j)}\big)(x_1,\ldots,x_{m-1},0)
\ 
(1\leq k\leq r)
\end{array}
$$
at $0$,
where $a_{i,kk}\ne0,a_{n,kk}\ne0$.
This implies that
$$
\begin{array}{l}
\hspace{-10mm}\rank\big(
d\beta_{m,1}(x_1,\ldots,x_{m-1},0),\ldots,
d\beta_{m,r}(x_1,\ldots,x_{m-1},0),\\
d\beta_{m+1,1}(x_1,\ldots,x_{m-1},0),\ldots,
d\beta_{m+1,r}(x_1,\ldots,x_{m-1},0),\\
\hspace{10mm}\ldots,\\
d\beta_{n-1,1}(x_1,\ldots,x_{m-1},0),\ldots,
d\beta_{n-1,r}(x_1,\ldots,x_{m-1},0),\\
d\beta_{n,1}(x_1,\ldots,x_{m-1},0),\ldots,
d\beta_{n,r-1}(x_1,\ldots,x_{m-1},0)\big)
\end{array}
$$
is the same as
$$
\begin{array}{l}
\hspace{-10mm}\rank\big(
df_m'(x_1,\ldots,x_{m-1},0),\ldots,
df_m^{(r)}(x_1,\ldots,x_{m-1},0),\\
df_{m+1}'(x_1,\ldots,x_{m-1},0),\ldots,
df_{m+1}^{(r)}(x_1,\ldots,x_{m-1},0),\\
\hspace{10mm}\ldots,\\
df_{n-1}'(x_1,\ldots,x_{m-1},0),\ldots,
df_{n-1}^{(r)}(x_1,\ldots,x_{m-1},0),\\
df_{n}'(x_1,\ldots,x_{m-1},0),\ldots,
df_{n}^{(r-1)}(x_1,\ldots,x_{m-1},0)\big),
\end{array}
$$
and this is full-rank by assumption.
\end{proof}
Assume that
$$
\begin{array}{l}
\rank\big(
df_m'(x_1,\ldots,x_{r(n-m+1)-1},0,\ldots,0),\ldots,
df_m^{(r)}(x_1,\ldots,x_{r(n-m+1)-1},0,\ldots,0),\\
\hspace{5mm}df_{m+1}'(x_1,\ldots,x_{r(n-m+1)-1},0,\ldots,0),\ldots,
df_{m+1}^{(r)}(x_1,\ldots,x_{r(n-m+1)-1},0,\ldots,0),\\
\hspace{10mm}\ldots,\\
\hspace{5mm}df_{n-1}'(x_1,\ldots,x_{r(n-m+1)-1},0,\ldots,0),\ldots,
df_{n-1}^{(r)}(x_1,\ldots,x_{r(n-m+1)-1},0,\ldots,0),\\
\hspace{5mm}df_{n}'(x_1,\ldots,x_{r(n-m+1)-1},0,\ldots,0),\ldots,
df_{n}^{(r-1)}(x_1,\ldots,x_{r(n-m+1)-1},0,\ldots,0)\big)\\
={r(n-m+1)-1}.
\end{array}
$$
Then the map $\theta$ defined by
\begin{equation}\label{eq:diff1}
\begin{array}{rL}
\tilde x\mapsto&
\bigg(
\beta_{m,1}
\Big(\tilde x_1,\ldots,\tilde x_{m-1},f_m\big(\psi^{-1}(\tilde x)\big)
\Big),
\ldots,
\beta_{m,r}\Big(\tilde x_1,\ldots,\tilde x_{m-1},f_m\big(\psi^{-1}(\tilde x)\big)
\Big),\\
&\hspace{0mm}
\beta_{m+1,1}
\Big(\tilde x_1,\ldots,\tilde x_{m-1},f_{m+1}\big(\psi^{-1}(\tilde x)\big)
\Big),
\ldots,
\beta_{m+1,r}\Big(\tilde x_1,\ldots,\tilde x_{m-1},f_{m+1}\big(\psi^{-1}(\tilde x)\big)
\Big),\\
&\hspace{10mm},\ldots,\\
&\hspace{0mm}
\beta_{n-1,1}
\Big(\tilde x_1,\ldots,\tilde x_{m-1},f_{n-1}\big(\psi^{-1}(\tilde x)\big)
\Big),
\ldots,
\beta_{n-1,r}\Big(\tilde x_1,\ldots,\tilde x_{m-1},
f_{n-1}\big(\psi^{-1}(\tilde x)\big)
\Big),\\
&\hspace{0mm}
\beta_{n,1}
\Big(\tilde x_1,\ldots,\tilde x_{m-1},f_n\big(\psi^{-1}(\tilde x)\big)
\Big),
\ldots,
\beta_{n,r-1}\Big(\tilde x_1,\ldots,\tilde x_{m-1},f_n\big(\psi^{-1}(\tilde x)\big)
\Big),\\
&\hspace{3mm}
\tilde x_{r(n-m+1)},\ldots,\tilde x_m\bigg)
\end{array}
\end{equation}
is a diffeomorphism-germ
on the source, and $\Theta$ defined by
\begin{equation}\label{eq:diff2}
\begin{array}{rL}
X\mapsto&
\big(
\beta_{m,1}(X_1,\ldots,X_{m-1},X_m),\ldots,
\beta_{m,r}(X_1,\ldots,X_{m-1},X_m),\\
&\hspace{2mm}
\beta_{m+1,1}(X_1,\ldots,X_{m-1},X_{m+1}),\ldots,
\beta_{m+1,r}(X_1,\ldots,X_{m-1},X_{m+1}),\\
&\hspace{10mm}\ldots,\\
&\hspace{2mm}
\beta_{n-1,1}(X_1,\ldots,X_{m-1},X_{n-1}),\ldots,
\beta_{n-1,r}(X_1,\ldots,X_{m-1},X_{n-1}),\\
&\hspace{2mm}
\beta_{n,1}(X_1,\ldots,X_{m-1},X_{n}),\ldots,
\beta_{n,r-1}(X_1,\ldots,X_{m-1},X_{n}),\\
&\hspace{2mm}
X_{r(n-m+1)},\ldots,X_{m-1},\beta_{m,0}(X_1,\ldots,X_{m-1},X_m),\ldots,
\beta_{n,0}(X_1,\ldots,X_{m-1},X_n)\big),
\end{array}
\end{equation}
where $X=(X_1,\ldots,X_n)$,
is also a diffeomorphism-germ on the target.
We set $\theta(x)=\bar x=(\bar x_1,\ldots,\bar x_m)$.
Then we see that
$\Theta\circ f\circ\psi^{-1}\circ\theta^{-1}$
has the following expression:
$$
\begin{array}{rcL}
\beta_{i,j}\big(f\circ\psi^{-1}\circ\theta^{-1}(\bar x)\big)
&=&
\beta_{i,j}\Big(\bar x_1,\ldots,\bar x_{m-1},
f_{i}\big(\psi^{-1}\circ\theta^{-1}(\bar x)\big)\Big)\\[2mm]
&=&
\beta_{i,j}\Big(\tilde x_1,\ldots,\tilde x_{m-1},
f_{i}\big(\psi^{-1}(\tilde x)\big)\Big)\\
&=&
\bar x_{r(i-m)+j}
\end{array}
$$
when
$m\leq i\leq n-1,\ 1\leq j\leq r$
or
$i=n,\ 1\leq j\leq r-1$,
and
$$
\begin{array}{rcL}
\beta_{i,0}\big(f\circ\psi^{-1}\circ\theta^{-1}(\bar x)\big)
&=&
\beta_{i,0}\Big(\bar x_1,\ldots,\bar x_{m-1},
f_{i}\big(\psi^{-1}\circ\theta^{-1}(\bar x)\big)\Big)\\[2mm]
&=&
\beta_{i,0}\Big(\tilde x_1,\ldots,\tilde x_{m-1},
f_{i}\big(\psi^{-1}(\tilde x)\big)\Big)\\[2mm]
&=&
\tilde x_m^{r+1}+
\sum_{j=1}^R\beta_{i,j}\Big(\tilde x_1,\ldots,\tilde x_{m-1},
f_{i}\big(\psi^{-1}(\tilde x)\big)\Big)\tilde x_m^j\\
&=&
\bar x_m^{r+1}+
\sum_{j=1}^R\bar x_{r(i-m)+j}\bar x_m^j,
\end{array}
$$
where $R=r$ for $m\leq i\leq n-1$ and $R=r-1$ for $i=n$.
Therefore $f$ is $\A$-equivalent to
$$
x\mapsto
\big(x_1,\ldots,x_{m-1}, \hat h_m(x),\ldots, \hat h_{m-1}(x),h_n(x)\big),
$$
where $\hat h_{i}(x)=h_i(x)+x_m^{r+1}$, and $h_i(x)$ $(i=m,\ldots,n)$
are as in \eqref{eq:morinnor2}.
By suitable linear translations on the source and target, we see that
$f$ is  $\A$-equivalent to the form as in \eqref{eq:morinnor1}.
This completes the proof.
\end{proof}
By Theorem \ref{thm:cri} and Lemma \ref{lem:key},
we have the following criteria.
\begin{corollary}\label{cor:cri}
Let\/ $f:(\R^m,0)\to(\R^n,0)$ be a map-germ
satisfying\/ $\rank df_0=m-1$.
Then\/ $f$ at\/ $0$ is an\/ $r$-Morin singularity
if and only if
\begin{itemize}
\item
$\eta\Lambda=\cdots=\eta^{r-1}\Lambda=0$ and $\eta^{r}\Lambda\ne0$
hold at\/ $0$, and
\item
$\rank d(\Lambda,\eta\Lambda,\ldots,\eta^{r-1}\Lambda)_0=r(n-m+1)$
holds.
\end{itemize}
Here, \hfill
$f=(f_1,\ldots,f_{m})$\hfill satisfies\hfill that\/\hfill
$d(f_1,\ldots,f_{m-1})=m-1$,\hfill
$\Lambda=(\lambda_1,\ldots,\lambda_{n-m+1})$,\\
$\lambda_i=\det(f_1,\ldots,f_{m-1},f_{m-1+i})$
and\/
$\eta$ is the null vector field.
\end{corollary}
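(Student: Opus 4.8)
The plan is to read the corollary off Theorem~\ref{thm:cri}, using Lemma~\ref{lem:key} to replace the abstract $r$-non-degeneracy by the explicit rank statement and using the inductive description of the strata $S_i(f)$ to make the non-singularity clause concrete. As a preliminary, note that the hypothesis $\rank df_0=m-1$ both provides a null vector field $\eta$ and forces $\Lambda(0)=\zv$: in coordinates realizing \eqref{eq:coord} one has $(df_{m-1+i})_0=0$, so each $\lambda_i(0)=\det(df_1,\dots,df_{m-1},df_{m-1+i})(0)=0$. Hence $0$ is automatically a singular point, which is why the statement lists only $\eta\Lambda=\cdots=\eta^{r-1}\Lambda=0$ and suppresses the redundant $\Lambda=\zv$.

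First I would apply Theorem~\ref{thm:cri} to reduce ``$f$ is an $r$-Morin singularity'' to the conjunction of $r$-non-degeneracy with the failure of the next singularity condition, that is, with $0\notin S_{r+1}(f)$. Then I would expand $r$-non-degeneracy by Lemma~\ref{lem:key} taken with $i=r-1$, which makes it equivalent to
\[
(\Lambda,\eta\Lambda,\dots,\eta^{r-1}\Lambda)(0)=\zv
\qquad\text{and}\qquad
\rank d(\Lambda,\eta\Lambda,\dots,\eta^{r-1}\Lambda)_0=r(n-m+1).
\]
The vanishing here is exactly $\eta\Lambda=\cdots=\eta^{r-1}\Lambda=0$ (with the $\Lambda=\zv$ part automatic), and the rank equality is the second bullet of the corollary verbatim.

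It remains to make the non-singularity clause explicit. By the inductive definition $S_{r+1}(f)=\{p\in S_r(f)\mid \eta^r\Lambda(p)=0\}$, and since $r$-non-degeneracy already places $0$ in $S_r(f)$, the condition $0\notin S_{r+1}(f)$ becomes precisely $\eta^r\Lambda(0)\neq0$, which is the remaining half of the first bullet. Since every step above is an equivalence, reading them backwards recovers $r$-non-degeneracy together with $0\notin S_{r+1}(f)$, so Theorem~\ref{thm:cri} also yields the converse; the two sets of conditions are therefore equivalent.

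I do not expect a real obstacle, since the analytic content is entirely in Theorem~\ref{thm:cri} and Lemma~\ref{lem:key} and the corollary is only their bookkeeping combination. The one point I would verify carefully is the index matching: in the normalized picture the iterated derivative $\eta^k\Lambda$ corresponds to $(f_m^{(k+1)},\dots,f_n^{(k+1)})$, so $r$-non-degeneracy consumes derivatives up to order $r$ (vanishing through $\eta^{r-1}\Lambda$, plus the rank of the block formed by the first $r$ of them), whereas it is the deeper stratum $S_{r+1}(f)$, and not $S_r(f)$, whose complement supplies the non-vanishing $\eta^r\Lambda\neq0$.
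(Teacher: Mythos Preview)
Your proposal is correct and follows exactly the route the paper takes: the paper derives the corollary in a single sentence ``By Theorem~\ref{thm:cri} and Lemma~\ref{lem:key}'', and you have simply unpacked that sentence, including the index bookkeeping that makes the ``not $(r{+}1)$-singular'' clause read as $\eta^{r}\Lambda(0)\ne0$. Nothing further is needed.
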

\red{Applying
Lemma \ref{lem:morin} for a given map-germ $f$,
it needs that $f$ is written in the normalized 
form \eqref{eq:normal2}, and to obtain this form, 
the implicit function theorem is applied.
On the other hand, since
our criteria uses only coordinate free data of $f$,
the author believes that our criteria (Theorem \ref{thm:cri}
and Corollary \ref{cor:cri}) is convenient to 
Lemma \ref{lem:morin} and indispensable in certain cases.
In fact, applications \cite{ishimachi,ist,kruy,nishi,syy} 
of this kind of criteria
might be difficult by using only of the criteria
which needs the normalization.}
We remark that 
our characterization can be interpreted as
a vector field representation of the intrinsic 
derivative.
See \cite{porteous} about the intrinsic derivative,
and see also \cite{ando,GG}.
In fact, the image of $v\in T_p\R^m$ by
$D(df)_p:T_p\R^m\to \Hom(K_p,L_{f(p)})$
coincides with 
$d\Lambda_p(v):\R\to\R^{n-m+1}$,
where $K_p=\ker df_p$, $L_{f(p)}=\coker df_p$,
and $T_p\R^k$ (resp. $T_p\Hom(K_p,L_{f(p)})$) is canonically
identified with $\R^k$ ($k=1,n-m+1$) (resp. $\Hom(K_p,L_{f(p)})$).

\section{Application to singularities of ruling maps}
A {\em one-parameter family of\/ $n$-planes in\/ $\R^{2n}$}
is a map defined by
$$
F_{(\gamma,\delta)}(t,u_1,\ldots,u_n)=\gamma(t)+\sum_{i=1}^nu_i\delta_i(t)
$$
where $\gamma:J\to\R^{2n}$ is a curve
and $\delta(t)=(\delta_1(t),\ldots,\delta_n(t)):J\to (\R^{2n})^n$
satisfies 
$\delta_i\cdot\delta_j=1$ if $i=j$ and
$\delta_i\cdot\delta_j=0$ if $i\ne j$,
where 
$J$ is an open interval, and
$\cdot$ stands for the canonical inner product.
We call $\gamma$ the {\em base curve},
and $\delta$ the {\em director frame}\/ of $F_{(\gamma,\delta)}$.
This is a generalization of ruled surfaces in $\R^3$.
Ruled surfaces are classical objects in differential geometry.
However, it has again paid attention in 
several areas \cite{pw,sasaki,sch}.
In general, ruled surfaces and their generalizations
have singularities, and 
they have been investigated 
in several articles \cite{ishidev, iztake, mu}.
To study the geometry and singularities of this kind of map,
the striction curve plays a crucial role 
(See \cite{gray,iztake}, for example).
One can always choose a director frame 
satisfying $\delta_i\cdot\delta_j'=0$ for any $i,j$.
A curve $\sigma(t)=\gamma(t)+\sum_{i=1}^nu_i(t)\delta_i(t)$
is a {\em striction curve}
if $\sigma'\cdot\delta_i'\equiv0$ $(1\leq i\leq n)$ holds,
where $\equiv$ means that the equality holds
identically.
If
$(\delta(t),\delta'(t))=
(\delta_1(t),\ldots,\delta_n(t),\delta_1'(t),\ldots,\delta_n'(t))$
are linearly independent, 
then we obtain 
a striction curve $\sigma(t)=\gamma(t)+\sum_{i=1}^nu_i(t)\delta_i(t)$
by setting 
$$
\pmt{u_1(t)\\ \vdots \\ u_n(t)}
=
-\Big(\big(\delta_i'(t)\cdot\delta_j'(t)\big)_{i,j=1,\ldots,n}\Big)^{-1}
\pmt{\gamma'\cdot \delta_1'\\ \vdots \\ \gamma'\cdot \delta_n'}(t).
$$
One can easily show that
the image of the striction curve coincides with
the set of singular points of $F_{(\gamma,\delta)}$.
Moreover, 
$p=(t,u_1,\ldots,u_n)$ is a $1$-Morin singularity
if and only if the striction curve is an immersion at $p$ 
(\cite[Theorem 2.5]{sajihiro1} and \cite[Theorem 4]{sajibcp}).
We give an alternative proof of this fact 
by using our criteria.
\begin{proof}
Let $
F_{(\gamma,\delta)}$
be a one-parameter family of $n$-planes in $\R^{2n}$.
We assume that for any $t$,
$(\delta(t),\delta'(t))$ are linearly independent, 
$\delta_i\cdot\delta'_j=0$ $(i,j=1,\ldots,n)$,
and $\gamma$ is a striction curve.
Then $S(F_{(\gamma,\delta)})=\{u_1=\cdots=u_n=0\}$.
By the definition of striction curve,
there exist $\alpha_i(t)$ $(1\leq i\leq n)$ such that
$\gamma'(t)=\sum_{i=1}^n\alpha_i(t)\delta_i(t)$ holds.
Hence we see that
the null vector field $\eta$ can be taken as a function of $t$
and
$\eta(t)=-\partial t+\sum_{i=1}^n\alpha_i(t)\partial u_i$.
Moreover, since
$(t,u_1,\ldots,u_n)$ and
the coordinate system generated by $(\delta,\delta')(0)$
satisfies the condition \eqref{eq:coord},
$\Lambda=(\lambda_1,\ldots,\lambda_n)$ is
$$
\lambda_j
=\det\left(
\gamma'+\sum_{i=1}^nu_i\delta_i',\delta,
\delta'_1,\ldots,\widehat{\delta'_j},\ldots,\delta'_n\right)
=
\det\left(
\gamma'+u_j\delta_j',\delta,
\delta'_1,\ldots,\widehat{\delta'_j},\ldots,\delta'_n\right),\\
$$
where $\delta=(\delta_1,\ldots,\delta_n)$
and
$
(\delta'_1,\ldots,\widehat{\delta'_j},\ldots,\delta'_n)
=
(\delta'_1,\ldots,\delta'_{j-1},\delta'_{j+1},\ldots,\delta'_n)
$.
Then by Corollary \ref{cor:cri},
$F_{(\gamma,\delta)}$ at $p=(t,0,\ldots,0)$ is a $1$-Morin singularity
if and only if
$\eta\Lambda\ne0$.
By a direct calculation,
$$
\begin{array}{rcL}
\eta\lambda_j(p)&=&
-\det\left(
\gamma'+u_j\delta_j',\delta,
\delta'_1,\ldots,\widehat{\delta'_j},\ldots,\delta'_n\right)'\Big|_{u_j=0}\\
&&\hspace{5mm}
+
\det\left(\alpha_j
\delta_j',\delta,
\delta'_1,\ldots,\widehat{\delta'_j},\ldots,\delta'_n\right)(t)\\
&=&
-\det\left(
\gamma'',\delta,
\delta'_1,\ldots,\widehat{\delta'_j},\ldots,\delta'_n\right)(t)\\
&&\hspace{5mm}-\det\left(
\gamma',\delta_1,\ldots,\delta_j',\ldots,\delta_n,
\delta'_1,\ldots,\widehat{\delta'_j},\ldots,\delta'_n\right)(t)
+(-1)^{n+j-1}\alpha_j\Delta\\
&=&-\det\left(
\alpha_j\delta_j',\delta,
\delta'_1,\ldots,\widehat{\delta'_j},\ldots,\delta'_n\right)(t)\\
&&\hspace{5mm}-\det\left(
\alpha_j\delta_j,\delta_1,\ldots,\delta_j',\ldots,\delta_n,
\delta'_1,\ldots,\widehat{\delta'_j},\ldots,\delta'_n\right)(t)
+(-1)^{n+j-1}\alpha_j\Delta\\
&=&(-1)^{n+j}\alpha_j\Delta+(-1)^{n+j-1}\alpha_j\Delta
+(-1)^{n+j-1}\alpha_j\Delta\\
&=&(-1)^{n+j-1}\alpha_j\Delta,
\end{array}
$$
where $\Delta=\det(\delta,\delta')$.
Hence $\eta\Lambda\ne0$ is equivalent to $(\alpha_1,\ldots,\alpha_n)\ne0$,
and it is equivalent to $\gamma'\ne0$.
\end{proof}
\section{${\cal A}$-isotopy of map-germs}
We define an equivalence relation 
called {\em $\A$-isotopy}, which is a 
strengthened version of $\A$-equivalence.
Let $d$ be a natural number.
A map-germ $f\in C^\infty(m,n)$ is said to be
{\em $d$-determined}\/ if 
any $g\in C^\infty(m,n)$ satisfying
$j^df(0)=j^dg(0)$ is
$\A$-equivalent to $f$,
where $j^df(0)$ is the $d$-jet of $f$ at $0$.
Let $\diff{k}{d}$ be the set of $d$-jets of
diffeomorphism-germs
$(\R^k,0)\to(\R^k,0)$ equipped with the relative
topology as a subset $\diff{k}{d}\subset J^d(k,k)$,
where $J^d(k,k)$ is canonically identified 
with a Euclidean space.
\begin{definition}
Let $f,g\in C^\infty(m,n)$ be $\A$-equivalent
map-germs that are $d$-determined.
Then $f$ and $g$ are {\em $\A$-isotopic}\/
if there exist continuous curves
$\sigma:[0,1]\to\diff{m}{d}$ and
$\tau:[0,1]\to\diff{n}{d}$ such that
$\sigma(0)$, $\tau(0)$ are both $d$-jets of the identity, and
$$j^d(g)(0)=j^d\big(\tau(1)\circ f\circ \sigma(1)\big)(0)$$
holds.
\end{definition}
Namely, $f$ and $g$ are $\A$-isotopic
if and only if
$j^df(0)$ and $j^d g(0)$ are located on
the same arc-wise connected component of
the $d$-jet of the $\A^d$-orbit of $j^df(0)$.
Since the set $\diff{m}{d,+}$ of $d$-jets
of orientation-preserving diffeomorphism-germs
is arc-wise connected,
$f$ and $g$ are $\A$-isotopic
if and only if
there exist
orientation preserving diffeomorphism-germs
$\sigma^+:(\R^m,0)\to(\R^m,0)$ and
$\tau^+:(\R^n,0)\to(\R^n,0)$
such that
$j^dg(0)=j^d(\tau^+\circ f\circ \sigma^+)(0)$ holds.
This notion of $\A$-isotopic is a slightly strengthened 
version of $\A$-equivalence. By the above arguments,
there are at most four $\A$-isotopy classes in an
$\A$-equivalent class.
However, the number of $\A$-isotopy classes of
an $\A$-equivalent class of a given map-germ $f$
may represent a property of $f$.
In this section, we study the number of $\A$-isotopy 
classes of each Morin singularity as an application of 
our criteria (Corollary \ref{cor:cri}).
We remark that this problem was first asked by Takashi Nishimura
\cite[p.226]{nfbook} as far as the author knows.

It is easy to see that any corank $1$ germ
is $\A$-isotopic to the form 
\eqref{eq:normal2}.
Furthermore, since we only used the diffeomorphisms
\eqref{eq:diff1} and \eqref{eq:diff2}
to obtain
the normal form \eqref{eq:morinnor1} from \eqref{eq:normal2},
any $r$-Morin singularity
is $\A$-isotopic to
\begin{equation}\label{eq:moriniso}
\begin{array}{L}
h_{r,(\ep_1,\ep_2)}(x)=\\
\hspace{3mm}
\left(\ep_1x_1,x_2,\ldots,x_{m-1},
\ep_1x_{1}x_m+\sum_{j=2}^rx_{j}x_m^j
,h_2(x)\red{,}\ldots,
h_{n-m}(x),\ep_2h_{n-m+1}(x)\right),
\end{array}
\end{equation}
where $\ep_1=\pm1,\ep_2=\pm1$, and $h_2,\ldots,h_{n-m+1}$ are as in 
\eqref{eq:morinnor1}.
We remark that the final linear translations are orientation-preserving.
We have the following.
\begin{proposition}\label{prop:iso1}
{\rm (I)} If\/ $r$ is even, 
then\/ $h_{r,(\ep_1,\ep_2)}$ is\/
$\A$-isotopic to\/ $h_{r,(\ep_1,1)}$.
Moreover, if\/ $m>r(m-n+1)$ holds,
then\/ $h_{r,(\ep_1,\ep_2)}$ is\/
$\A$-isotopic to\/ $h_{0,r}$.
{\rm (II)} If\/ $r$ is odd, 
then\/ $h_{r,(\ep_1,\ep_2)}$ is\/
$\A$-isotopic to\/ $h_{r,(1,\ep_2)}$.
Moreover, if\/ $m>r(m-n+1)$ holds,
then\/ $h_{r,(\ep_1,\ep_2)}$ is\/
$\A$-isotopic to\/ $h_{0,r}$.
\end{proposition}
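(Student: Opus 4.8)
The plan is to reduce the whole statement to a computation of \emph{orientation signatures} of symmetries of the single normal form $h_{0,r}$. First I would record that
$h_{r,(\ep_1,\ep_2)}=\tau_{\ep_2}\circ h_{0,r}\circ\sigma_{\ep_1}$, where $\sigma_{\ep_1}(x)=(\ep_1x_1,x_2,\dots,x_m)$ and $\tau_{\ep_2}(X)=(X_1,\dots,X_{n-1},\ep_2X_n)$ are the linear sign changes with $\det d\sigma_{\ep_1}=\ep_1$ and $\det d\tau_{\ep_2}=\ep_2$; this is immediate from the shapes of \eqref{eq:moriniso} and \eqref{eq:morinnor1}. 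Then I would establish the following transfer principle: if $(\psi,\Phi)$ is a pair of diffeomorphism-germs with $\Phi\circ h_{0,r}\circ\psi=h_{0,r}$ and $(\det d\psi_0,\det d\Phi_0)=(s_1,s_2)$, then $h_{r,(\ep_1,\ep_2)}$ is $\A$-isotopic to $h_{r,(s_1\ep_1,s_2\ep_2)}$. Indeed, substituting $h_{0,r}=\Phi\circ h_{0,r}\circ\psi$ and then $h_{0,r}=\tau_{\ep_2}^{-1}\circ h_{r,(\ep_1,\ep_2)}\circ\sigma_{\ep_1}^{-1}$ into $h_{r,(s_1\ep_1,s_2\ep_2)}=\tau_{s_2\ep_2}\circ h_{0,r}\circ\sigma_{s_1\ep_1}$ exhibits $h_{r,(s_1\ep_1,s_2\ep_2)}=\Phi^+\circ h_{r,(\ep_1,\ep_2)}\circ\phi^+$ with $\Phi^+=\tau_{s_2\ep_2}\Phi\tau_{\ep_2}^{-1}$ and $\phi^+=\sigma_{\ep_1}^{-1}\psi\sigma_{s_1\ep_1}$; these are orientation-preserving since $\det d\Phi^+_0=s_2^2\ep_2^2=1$ and $\det d\phi^+_0=s_1^2\ep_1^2=1$. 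All maps here are linear sign changes, so one in fact gets an exact equality, hence equality of $d$-jets for every $d$, and this is the desired $\A$-isotopy. Thus it remains only to realize the required signatures $(s_1,s_2)$ by symmetries of $h_{0,r}$.

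For the first assertions of (I) and (II) I would use the single symmetry that flips $x_1,\dots,x_r$ on the source and $X_1,\dots,X_r$ together with $X_m$ on the target. It preserves $h_{0,r}$: the matching flips of the identity coordinates $x_1,\dots,x_r$ and $X_1,\dots,X_r$ restore those components; $h_1=\sum_{j=1}^r x_jx_m^j$ is negated by the source flips and restored by $X_m\mapsto-X_m$; and $h_2,\dots,h_{n-m+1}$ involve none of $x_1,\dots,x_r$ nor the coordinate $X_m$. Its signature is $((-1)^r,(-1)^{r+1})$, coming from $r$ source flips and $r+1$ target flips. For $r$ even this is $(1,-1)$, so $h_{r,(\ep_1,\ep_2)}\sim h_{r,(\ep_1,-\ep_2)}$ and hence $h_{r,(\ep_1,\ep_2)}\sim h_{r,(\ep_1,1)}$; for $r$ odd it is $(-1,1)$, giving $h_{r,(\ep_1,\ep_2)}\sim h_{r,(1,\ep_2)}$.

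For the two \emph{Moreover} parts I would use one further symmetry. The dimension hypothesis there guarantees a source coordinate $x_{k_0}$ with $k_0\le m-1$ that occurs in none of $h_1,\dots,h_{n-m+1}$ (a free variable). Flipping $x_{k_0}$ and $X_{k_0}$ preserves $h_{0,r}$ and has signature $(-1,-1)$. Composing it with the previous symmetry realizes $(-1,1)$ when $r$ is even and $(1,-1)$ when $r$ is odd, so in either case all four signatures $\{\pm1\}^2$ are attained; hence every $h_{r,(\ep_1,\ep_2)}$ is $\A$-isotopic to $h_{r,(1,1)}=h_{0,r}$.

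I expect the only delicate points to be bookkeeping rather than ideas. The main thing to get right is that flipping a source coordinate $x_k$ with $k\le m-1$ forces the matching target flip $X_k$, because $x_k$ is an identity component of $h_{0,r}$; this is exactly what makes the signature of the first symmetry equal to $((-1)^r,(-1)^{r+1})$, and it is the source of the even/odd dichotomy in the statement. The second routine check is the conjugation identity ensuring that $\phi^+$ and $\Phi^+$ come out orientation-preserving. Neither step uses the detailed polynomial structure of the $h_i$ beyond the observations above.
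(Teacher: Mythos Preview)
Your proof is correct, and it organizes the argument more conceptually than the paper does. The paper works directly on $h_{r,(\ep_1,\ep_2)}$, applying a chain of explicit ``$\pi$-rotations'' (sign flips on even-sized index sets, chosen so as to be orientation-preserving) on source and target in sequence, tracking the intermediate form at each step until it reaches the desired $h_{r,(\ep_1,1)}$, $h_{r,(1,\ep_2)}$, or $h_{0,r}$. You instead factor $h_{r,(\ep_1,\ep_2)}=\tau_{\ep_2}\circ h_{0,r}\circ\sigma_{\ep_1}$ once and for all, and reduce everything to the group of orientation signatures $(s_1,s_2)\in\{\pm1\}^2$ realized by symmetries of the single model $h_{0,r}$; your transfer principle then converts any such symmetry into an $\A$-isotopy between $h_{r,(\ep_1,\ep_2)}$ and $h_{r,(s_1\ep_1,s_2\ep_2)}$. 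The two basic symmetries you exhibit (the $(x_1,\dots,x_r)/(X_1,\dots,X_r,X_m)$ flip, and in the suspension case the free-variable flip) are exactly the ingredients hidden inside the paper's $\pi$-rotations, but your packaging makes the role of the parity of $r$ transparent: it is precisely the signature $((-1)^r,(-1)^{r+1})$ of the first symmetry. The paper's hands-on approach trades this clarity for not having to set up the conjugation identity; yours needs that one extra line of algebra but then dispatches all four sub-statements uniformly.
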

The proof of this proposition is not difficult,
but rather long. We postpone it to
Section \ref{sec:prf}.
By Proposition \ref{prop:iso1}, the $\A$-isotopic condition for
$r$-Morin singularities of suspensions ($m>r(n-m+1)$)
is the same as $\A$-equivalence, so
we stick to the non-suspension case ($m=r(n-m+1)$).
In this case,
by Corollary \ref{cor:cri},
a necessary condition that
$f$ is $\A$-equivalent to an $r$-Morin singularity
is 
\begin{equation}\label{eq:d}
\det d(\Lambda,\Lambda',\ldots,\Lambda^{(r-1)})(0)\ne0.
\end{equation}
Set $D=\sign \det d(\Lambda,\Lambda',\ldots,\Lambda^{(r-1)})(0)$,
and $a=n-m$.
Calculating $D$ for \eqref{eq:moriniso},
we obtain $D=\ep_1^{(a+1)r+1}\ep_2^r$.
Furthermore, the sign of $D$ may depend on 
the choice of oriented frame $\{\xi_1,\ldots,\xi_{m-1},\eta\}$,
and
an orientation-preserving diffeomorphism on the target.
Let $\{\tilde\xi_1,\ldots,\tilde\xi_{m-1},\tilde\eta\}$
be another frame, and let $\tilde D$ stand for the
sign of \eqref{eq:d} with respect to this
frame. Then $\tilde\eta(0)=\alpha \eta(0)$ holds.
If $\alpha>0$ then $\tilde D=D$, and 
if $\alpha<0$, then $\tilde D=(-1)^{(r-1)r(a+1)/2}D$
holds.
On the other hand,
let $\Phi=(\Phi_1,\ldots,\Phi_n)$ be
an orientation-preserving diffeomorphism on the target,
and let
$\bar D$ stand for the sign of 
\eqref{eq:d} of $\Phi\circ f$.
By \eqref{eq:lambdatar},
if $(\Phi_1,\ldots,\Phi_{m-1})|_{\{x_m=\cdots=x_n=0\}}$
is orientation-preserving, then $\Bar D=D$,
and
if $(\Phi_1,\ldots,\Phi_{m-1})|_{\{x_m=\cdots=x_n=0\}}$
is orientation-reversing, then $\Bar D=(-1)^{ar}D$
holds.
We divide $r$ into four cases via modulo four.
Let $l$ be an integer.

\underline{Case 1: $r=4l$}
In this case, 
$h_{r,(\ep_1,\ep_2)}$ and $h_{r,(\ep_1',\ep_2')}$ are 
$\A$-isotopic if and only if $\ep_1=\ep_1'$.
\begin{proof}
By Proposition \ref{prop:iso1},
$\ep_2$ may be deleted.
By the above arguments,
$D=\ep_1$ is an invariant of the $\A$-isotopic condition.
\end{proof}

\underline{Case 2: $r=4l+1$}
In this case, 
if $a$ is even, then 
$h_{r,(\ep_1,\ep_2)}$ and $h_{r,(\ep_1',\ep_2')}$ are 
$\A$-isotopic if and only if $\ep_2=\ep_2'$.
If $a$ is odd, then 
$h_{r,(\ep_1,\ep_2)}$ is
$\A$-isotopic to $h_{0,r}$.
\begin{proof}
By Proposition \ref{prop:iso1}, $\ep_1$ may be deleted.
By the above arguments again,
$D=\ep_2$ is an invariant of the $\A$-isotopic condition, and
we have the first conclusion.
For a proof of the second conclusion, see Section \ref{sec:prf}.
\end{proof}
In particular, the $\A$-class and the $\A$-isotopy class
coincides for the Whitney umbrella ($m=2,n=3,r=1$).

\underline{Case 3: $r=4l+2$}
In this case, 
if $a$ is odd, then 
$h_{r,(\ep_1,\ep_2)}$ and $h_{r,(\ep_1',\ep_2')}$ are 
$\A$-isotopic if and only if $\ep_1=\ep_1'$.
If $a$ is even, then 
$h_{r,(\ep_1,\ep_2)}$ is
$\A$-isotopic to $h_{0,r}$.
\begin{proof}
By Proposition \ref{prop:iso1},  $\ep_2$ may be deleted.
By the above arguments again,
$D=\ep_1$ is an invariant of the $\A$-isotopic condition, and
we have the first conclusion.
For a proof of the second conclusion, see Section \ref{sec:prf}.
\end{proof}
\underline{Case 4: $r=4l+3$}
In this case,
$h_{r,(\ep_1,\ep_2)}$ is
$\A$-isotopic to $h_{0,r}$.
See Section \ref{sec:prf} for a proof.

Summarizing up the above arguments,
we can summarize 
the number of $\A$-isotopy classes
of $\A$-classes for each Morin singularity.
We summarize it in the following table.
\begin{table}[!h]
\centering
\begin{tabular}{|c|c|c|c|}
\hline
&
\multicolumn{2}{c|}{$m=r(n-m+1)$}&
$m>r(n-m+1)$\\
\cline{1-3}
&$a$ : odd (invariant)&$a$ : even (invariant)&\\
\hline
\hline
$r=4l$&  2 $(\ep_1)$&2 $(\ep_1)$&1\\
\hline
$r=4l+1$&1&2 $(\ep_2)$&1\\
\hline
$r=4l+2$&2 $(\ep_1)$&1&1\\
\hline
$r=4l+3$&1&1&1\\
\hline
\end{tabular}
\caption{Number of $\A$-isotopy classes
in the $\A$-classes.}
\end{table}

\section{Proofs}\label{sec:prf}
Here, we use the following terminology:
Let $I$ be a set of indices such that
$\#I$ is even.
Then the {\em $\pi$-rotations of\/ $I$\/} are diffeomorphisms
$(x_1,\ldots,x_{k})\mapsto
(\tilde x_1,\ldots,\tilde x_{k}),$
where $\tilde x_j=\ep x_j$ if $j\in I$,
and $\tilde x_j=x_j$ if $j\not\in I$, with $\ep=-1$.
We see that 
applying 
$\pi$-rotations both on the source and the target
does not change the $\A$-isotopy class.
\begin{proof}[Proof of Proposition $\ref{prop:iso1}$]
First we show (I).
Set $\ep_2=-1$.
By a $\pi$-rotation of $\{m,n\}$ on the target,
$h_{r,(\ep_1,\ep_2)}$ is
$\A$-isotopic to
\begin{equation}\label{eq:prfiso0100}
\left(\ep_1x_1,\ldots,x_{m-1},
\ep_2\left(\ep_1x_{1}x_m+\sum_{j=2}^rx_{j}x_m^j\right)
,h_2(x)\red{,}\ldots,
h_{n-m}(x),h_{n-m+1}(x)\right).
\end{equation}
Considering $\pi$-rotations of $\{1,\ldots,r\}$ on the source,
\eqref{eq:prfiso0100} is
$\A$-isotopic to 
\begin{equation}\label{eq:prfiso0200}
\left(\ep_1\ep_2x_1,\ep_2x_2\red{,}\ldots,\ep_2x_r,x_{r+1},\ldots,x_{m-1},
\ep_1x_{1}x_m+\sum_{j=2}^rx_{j}x_m^j
,h_2(x)\red{,}\ldots,
h_{n-m+1}(x)\right).
\end{equation}
Considering $\pi$-rotations of $\{1,\ldots,r\}$ on the target,
\eqref{eq:prfiso0200} is
$\A$-isotopic to 
\begin{equation}\label{eq:prfiso0250}
\left(\ep_1x_1,\ldots,x_{m-1},
\ep_1x_{1}x_m+\sum_{j=2}^rx_{j}x_m^j
,h_2(x)\red{,}\ldots,
h_{n-m+1}(x)\right),
\end{equation}
which proves the first part of (I).
We assume $m>r(m-n+1)$ and set $\ep_1=-1$.
Then
$x_{m-1}$ is not contained in any terms of $h_1,\ldots,h_{n-m+1}$.
Considering $\pi$-rotations of $\{2,\ldots,r,m-1\}$ on the source,
\eqref{eq:prfiso0250} is
$\A$-isotopic to 
\begin{equation}\label{eq:prfiso0300}
\big(\ep_1x_1,\ldots,\ep_1x_r,x_{r+1},\ldots,x_{m-2},\ep_1x_{m-1},
\ep_1h_1(x),
h_2(x)\red{,}\ldots\big),
\end{equation}
where $h_1$ is as in \eqref{eq:morinnor1}.
Considering $\pi$-rotations of $\{1,\ldots,r,m-1,m\}$ on the target,
\eqref{eq:prfiso0300} is
$\A$-isotopic to $h_{0,r}$,
which proves the second part of (I).

Secondly, we show (II).
Set $\ep_1=-1$.
Considering $\pi$-rotations of $\{2,\ldots,r\}$ on the source,
\eqref{eq:prfiso0100} is
$\A$-isotopic to 
\begin{equation}\label{eq:prfiso0400}
\big(\ep_1x_1,\ldots,\ep_1x_r,x_{r+1},\ldots,x_{m-1},
\ep_1h_1(x)
,h_2(x)\red{,}\ldots,
h_{n-m}(x),
\ep_2h_{n-m+1}(x)\big).
\end{equation}
Then by $\pi$-rotations on the target, we see that
\eqref{eq:prfiso0400} is $\A$-isotopic to 
$h_{r,(1,\ep_2)}$, which proves the first part
of (II).
We assume $m>r(m-n+1)$ and set $\ep_2=-1$.
Then by $\pi$-rotations on the target, 
$h_{r,(1,\ep_2)}$ is $\A$-isotopic to 
\begin{equation}\label{eq:prfiso0500}
\big(x_1,\ldots,x_{m-1},
\ep_2h_1(x),
h_2(x)\red{,}\ldots,h_{n-m+1}(x)\big).
\end{equation}
Considering $\pi$-rotations of $\{1,\ldots,r,m-1\}$ of the source,
\eqref{eq:prfiso0500} is
$\A$-isotopic to 
\begin{equation}\label{eq:prfiso0600}
\big(\ep_2x_1,\ldots,\ep_2x_r,x_{r+1},\ldots,x_{m-2},\ep_2x_{m-1},
h_1(x),
h_2(x)\red{,}\ldots,h_{n-m+1}(x)\big).
\end{equation}
Then by $\pi$-rotations on the target, we see that
\eqref{eq:prfiso0600} is $\A$-isotopic to 
$h_{0,r}$, which proves the second part
of (II).
\end{proof}
\begin{proof}[Proof of the claim of the second part of Case\/ $2$]
Let us assume $r=4l+1$ and $a$ is odd.
By Proposition \ref{prop:iso1},
$h_{r,(\ep_1,\ep_2)}$ is $\A$-isotopic to $h_{r,(1,\ep_2)}$.
We show $h_{r,(1,\ep_2)}$ is $\A$-isotopic to $h_{0,r}$.
Set $\ep_2=-1$.
Considering $\pi$-rotations of
$$
\begin{array}{l}
\big\{
\underbrace{\underbrace{1,3\red{,}\ldots,r}_{\substack{\text{odd}}},\ 
\underbrace{{r+1},{r+3},\ldots,{2r}}_{\substack{\text{odd}}},\ 
\ldots,\ 
\underbrace{{(a-1)r+1},{(a-1)r+3},\ldots,{ar}}_{\substack{\text{odd}}}
}_{\substack{\text{odd}}},\\
\hspace{70mm}
\underbrace{{ar+1},{ar+3},\ldots,{ar+r-2}}_{\substack{\text{even}}},\ 
{m}\big\}
\end{array}
$$
on the source, we see that $h_{r,(1,\ep_2)}$ is $\A$-isotopic to
$$\big(\ep_2x_1,x_2,\ep_2x_3,x_4,\ldots,\ep_2x_{m-2},x_{m-1},
h_1(x),\ldots,
h_{n-m}(x),\ep_2h_{n-m+1}(x)\big),
$$
noticing $(a+1)r=m$.
By $\pi$-rotations on the target,
we have the result.
\end{proof}
\begin{proof}[Proof of the claim of the second part of Case\/ $3$]
Let us assume $r=4l+2$ and $a$ is even.
By Proposition \ref{prop:iso1},
$h_{r,(\ep_1,\ep_2)}$ is $\A$-isotopic to $h_{r,(\ep_1,1)}$.
We show $h_{r,(\ep_1,1)}$ is $\A$-isotopic to $h_{0,r}$.
Set $\ep_1=-1$.
Considering $\pi$-rotations of
$$
\begin{array}{l}
\big\{1,
\underbrace{2,4\red{,}\ldots,r}_{\substack{\text{odd}}},\\
\hspace{5mm}
\underbrace{
\underbrace{{r+1},{r+3},\ldots,{2r-1}}_{\substack{\text{odd}}},\ 
\ldots,\ 
\underbrace{{r(a-1)+1},{r(a-1)+3},\ldots,{r(a-1)+r-1}
}_{\substack{\text{odd}}}}_{\substack{\text{odd}}},\\
\hspace{75mm}
\underbrace{{ar+2},{ar+4},\ldots,{ar+r-2}}_{\substack{\text{even}}},{m}
\big\}
\end{array}
$$
we see that
$h_{r,(\ep_1,1)}$ is $\A$-isotopic to
$$
\big(x_1,\ep_1x_2,x_3,\ep_1x_4\red{,}\ldots,\ep_1x_{m-2},x_{m-1},
\ep_1h_1(x)
,h_2(x)\red{,}\ldots,
h_{n-m}(x),\ep_1h_{n-m+1}(x)\big).
$$
By $\pi$-rotations on the target,
we have the result.
\end{proof}
\begin{proof}[Proof of the claim of Case\/ $4$]
Let us assume $r=4l+3$.
By Proposition \ref{prop:iso1},
$h_{r,(\ep_1,\ep_2)}$ is $\A$-isotopic to $h_{r,(1,\ep_2)}$.
We show $h_{r,(1,\ep_2)}$ is $\A$-isotopic to $h_{0,r}$.
Set $\ep_2=-1$.
Considering $\pi$-rotations of
$$
\begin{array}{l}
\big\{
\underbrace{1,3,\ldots,r}_{\substack{\text{even}}},\ 
\ldots,\ 
\underbrace{{a(r-1)+1},{a(r-1)+3},\ldots,{a(r-1)+r}
}_{\substack{\text{even}}},\\
\hspace{60mm}
\underbrace{{ar+1},{ar+3},\ldots,{ar+r-2}}
_{\substack{\text{odd}}},\ 
m\big\},
\end{array}$$
we see that
$h_{r,(1,\ep_2)}$ is $\A$-isotopic to 
$$
(\ep_2x_1,x_2,\ep_2x_3,\ldots,\ep_2x_{m-2},x_{m-1},
h_1(x),\ldots,\ep_2h_{n-m+1}(x)\big).
$$
By $\pi$-rotations on the target,
we have the result.
\end{proof}
\medskip

{\bf Acknowledgement}
The author thanks Toru Ohmoto for fruitful advices.
He also thanks Yasutaka Nakanishi for constant
encouragement.
The author thanks the referee for a very careful reading.


\medskip

{\small 
\begin{flushright}
\begin{tabular}{l}
Department of Mathematics,\\
Graduate School of Science, \\
Kobe University, \\
Rokko, Nada, Kobe 657-8501, Japan\\
  E-mail: {\tt sajiO\!\!\!amath.kobe-u.ac.jp}
\end{tabular}
\end{flushright}
}

\end{document}